 \newtheorem{thm}{Theorem}[section]
 \newtheorem{cor}[thm]{Corollary}
 \newtheorem{lem}[thm]{Lemma}
 \theoremstyle{definition}
 \newtheorem*{defn}{Definition}
 \theoremstyle{remark}
 \newtheorem*{rem}{Remark}
 \numberwithin{equation}{section}
 \newcommand{\myfnmark}[1]{\mbox{\textsuperscript{\normalfont #1}}}
\begin{document}

\title{Sums and Products of Regular Polytopes' Squared Chord Lengths}

\author{Jessica N. Copher}
\address{Department of Mathematics\br Box 8205\br North Carolina State University\br Raleigh, NC  27695-8205}
\email{jncopher@ncsu.edu}

\begin{abstract}
Although previous research has found several facts concerning chord lengths of regular polytopes, none of these investigations has
considered whether any of these facts define relationships that might
generalize to the chord lengths of \emph{all} regular polytopes. Consequently,
this paper explores whether four findings of previous studies\textemdash viz., the
four facts relating to the sums and products of squared chord
lengths of regular polygons inscribed in unit circles\textemdash can
be generalized to all regular ($n$-dimensional) polytopes (inscribed
in unit $n$-spheres). We show that (a) one of these four facts actually does generalize to all regular polytopes (of dimension $n\geq 2$), (b) one generalizes to all regular polytopes
except most simplices, (c) one generalizes only to the
family of crosspolytopes, and (d) one generalizes only to the crosspolytopes
and 24-cell. We also discover several corollaries (due to reciprocation)
and some theorems specific to the three-dimensional regular polytopes
along the way.
\end{abstract}

\subjclass{Primary 51M20; Secondary 52B12, 52B11, 52B10}

\keywords{Regular convex polytopes, chords, diagonals, faces}

\maketitle

\section{Introduction}
Several discoveries have been made concerning the chord lengths of
certain types of regular polytopes (most often, the regular two-dimensional polytopes, i.e., regular polygons; e.g., see \cite{FontaineHurley,Kappraff2002,SasaneChapman,Steinbach}).
However, no single rule has been found to apply to the chord lengths
of \emph{all} regular polytopes.

To remedy this situation, we consider whether any of the four facts
discovered in one area of previous investigation (regarding the chord
lengths of two-dimensional regular polytopes) can be generalized to
apply to the chord lengths of \emph{all} $n$-dimensional regular 
polytopes where $n\geq 2$.\footnote{Although the 0-dimensional polytope (i.e., a single point) and the 1-dimensional polytope (i.e., a line segment) are regular, we exclude them as being trivial.}  (In this paper, we will call an $n$-dimensional
polytope an ``$n$-polytope,'' an $n$-dimensional sphere an
``$n$-sphere,'' etc.).
\subsection{The Four Known Facts}

First, recall that a \emph{chord} of a regular 2-polytope (i.e., regular polygon) is a line segment whose two endpoints are vertices of the
polygon.

Let $\mathcal{P}$ be a regular polygon with $E$ edges that is inscribed
in a unit circle (i.e., ``2-sphere'').
\begin{enumerate}
\item The sum of the squared chord lengths of $\mathcal{P}$ equals $E^{2}$.
\item The sum of the squared \emph{distinct} chord lengths of $\mathcal{P}$
equals:
\begin{enumerate}
\item $E$ (when $E$ is odd) \cite{MorleyHarding}
\item some integer (when $E$ is even) \cite[pp. 490-491]{Kappraff2002}.
\end{enumerate}
\item The product of the squared chord lengths of $\mathcal{P}$ equals
$E^{E}$.
\item The product of the squared \emph{distinct }chord lengths of $\mathcal{P}$
equals:
\begin{enumerate}
\item $E$ (when $E$ is odd)
\item some integer (when $E$ is even) \cite[pp. 490-491]{Kappraff2002}.
\end{enumerate}
\end{enumerate}
(Facts 1 and 3 are given in an unpublished 2013 article\footnote{Mustonen's unpublished article is titled
``Lengths of edges and diagonals and sums of them in regular polygons as roots of algebraic equations'' and can be accessed at http://www.survo.fi/papers/Roots2013.pdf as of 16 March 2019.

Although Mustonen states Facts 1 and 3 without mathematical proof,
a proof of Fact 1 may be obtained by substituting $R=1$ in Solution 6.73 of \cite{Prasolov}. (An English version of \cite{Prasolov}, edited and translated by Dimitry Leites, is in preparation with the title, \emph{Encyclopedia of Problems in Plane and Solid Geometry}).

Likewise, Fact 3 can be derived from a statement proven in \cite[pp. 161-162]{Honsberger}, namely, that if $\mathcal{P}$
is a regular polygon with $E$ edges inscribed in a unit circle,
the product $p$ of the lengths of the chords of $\mathcal{P}$ emanating from a given vertex $\mathbf{P}$ equals $E$. Denoting the vertices of $\mathcal{P}$ by $\mathbf{P_i}$ for $i=1,2,...,V$ and their associated products by $p_i$, we have  $p_1p_2...p_V=p^V=E^{V}$. Since each chord's length occurs twice in $p_1p_2...p_V$, this is also equal to the product of the \emph{squared} chord lengths
of $\mathcal{P}$. Since, for regular polygons, $V=E$ (see \cite{CoxeterGreitzer}),
we obtain Fact 3.} by S. Mustonen).

\subsection{Overview of Procedure}

To test the generalization of these four 2-polytope facts to all dimensions (greater than or equal to 2),
we apply the following four steps:
\begin{enumerate}
\item Consider a regular $n$-polytope inscribed in a
unit $n$-sphere ($n \geq 2$).
\item Compute both the sum and product of (a) the polytope\textquoteright s
squared chord lengths and (b) the polytope\textquoteright s squared
distinct chord lengths.
\item Compare the results obtained in Step 2 with the values (which are
usually some variant on $E$) given by the relevant 2-polytope fact.
\item If the compared values in Step 3 are different, compare the results
obtained in Step 2 with other values related to the $n$-polytope
(e.g., other $j$-face cardinalities).
\end{enumerate}

The next section contains a review of some basic
definitions and properties necessary to carrying out these steps.
In the four remaining sections, we carry out these steps
for each of the four facts in turn.

\section{Preliminaries}

In this paper, all $n$-polytopes being considered are \emph{finite
convex} regions of Euclidean $n$-space.  For a full definition, see \cite[pp.126-127]{Coxeter1973}.

An $n$-polytope $\mathcal{P}$ has one or more \emph{$j$-faces}
where $j \in \left \{-1,0,1,2,...,n \right \}$ (cf. \cite{Matousek}). For $j=0,1,...,n-1$,
each of these $j$-faces is the $j$-dimensional intersection of $\mathcal{P}$
and $\mathcal{H}$ (where $\mathcal{H}$ is an $\left(n-1\right)$-plane such that $\mathcal{P}$ lies in one of the closed half-spaces
determined by $\mathcal{H}$ and has a nonempty intersection with
$\mathcal{H}$) \cite{Matousek}. For $j=n$, the
sole $j$-face is just the $n$-polytope $\mathcal{P}$ itself \cite{Matousek},
and, for $j=-1$, the sole $j$-face is the empty set \cite{Matousek}.
For any $n$-polytope, the 0-faces are called \emph{vertices},
the 1-faces are called \emph{edges}, the $(n-2)$-faces are called \emph{ridges}, and the $(n-1)$-faces are called \emph{facets} \cite{Matousek}.

\sloppy An $n$-polytope is called \emph{regular} if:
\begin{enumerate}
\item Whenever one of its $\left(j-2\right)$-faces is incident with one
of its $j$-faces, there are exactly two\emph{ }$\left(j-1\right)$-faces
that are incident with both the $\left(j-2\right)$-face and the $j$-face
(for $j=1,2,...,n$), and
\item For a given $j\in\left\{ 0,1,2,...,n\right\} $, all of the $j$-faces
are equidistant from a point $\mathbf{O}$ called the polytope's \emph{center}
\cite{Coxeter1965}. (That is, for each $j\in\left\{0,1,2,..., n\right\} $,
there is an $n$-sphere centered at $\mathbf{O}$ that passes through
all the $j$-faces' centers). Without loss of generality, all polytopes
will be considered as being centered at the origin.
\end{enumerate}

One useful consequence of regularity is that, for a given $j$, any
of a polytope\textquoteright s $j$-faces can be interchanged with
any of the polytope's other $j$-faces via one of its \emph{symmetries}
\cite{Grunbaum}. A second useful consequence of regularity
is that, when a regular $n$-polytope is circumscribed
about an $n$-sphere, $n\geq2$, it may be reciprocated with respect to the $n$-sphere
to form a new regular $n$-polytope inscribed inside the $n$-sphere
(this maps the original polytope's $\left(j-1\right)$-faces to the
new polytope's $\left(n-j\right)$-faces for $j=0,1,...,n$) \cite{Coxeter1973}.

There are exactly five regular 3-polytopes, six
regular 4-polytopes, and three regular $n$-polytopes in each dimension
$n\geq5$ \cite{Coxeter1973}. Table \ref{tab:j-Face-Cardinality-and-Shape} shows the number and shape of the $j$-faces
of each of these polytopes.

% Making lettered footnotes
\begingroup
\makeatletter
\renewcommand\@makefntext[1]%
   {\noindent\makebox[1.8em][r]{\myfnmark}#1} 
\makeatother
\renewcommand{\thefootnote}{\alph{footnote}}

\begin{longtable}{ccccccc}
\caption{\label{tab:j-Face-Cardinality-and-Shape}Cardinality and Shape of
$j$-Faces of Regular Polytopes \cite{Coxeter1973}.}
\tabularnewline
\hline
\midrule 
\textbf{Polytope} & \multicolumn{4}{c}{\textbf{$j$-Face Cardinality}} & \multicolumn{2}{c}{\textbf{Shape of $j$-Face}\footnotemark[1]}\tabularnewline
\endfirsthead
\tabularnewline
\midrule 
\textbf{3-polytopes} & \textbf{0-face} & \textbf{1-faces} & \textbf{2-face} &  & \textbf{2-face} & \tabularnewline
\midrule 
\emph{tetrahedron} & 4 & 6 & 4 &  & \small triangle & \tabularnewline
\midrule 
\emph{octahedron} & 6 & 12 & 8 &  & \small triangle & \tabularnewline
\midrule 
\emph{cube} & 8 & 12 & 6 &  & \small square & \tabularnewline
\midrule 
\emph{icosahedron} & 12 & 30 & 20 &  & \small triangle & \tabularnewline
\midrule 
\emph{dodecahedron} & 20 & 30 & 12 &  & \small pentagon & \tabularnewline
\midrule 
\textbf{4-polytopes} & \textbf{0-face} & \textbf{1-face} & \textbf{2-face} & \textbf{3-face} & \textbf{2-face} & \textbf{3-face}\tabularnewline
\midrule 
\emph{5-cell} & 5 & 10 & 10 & 5 & \small triangle & \small tetrahedron\tabularnewline
\midrule 
\emph{16-cell} & 8 & 24 & 32 & 16 & \small triangle & \small tetrahedron\tabularnewline
\midrule 
\emph{8-cell} & 16 & 32 & 24 & 8 & \small square & \small cube\tabularnewline
\midrule 
\emph{24-cell} & 24 & 96 & 96 & 24 & \small triangle & \small octahedron\tabularnewline
\midrule 
\emph{600-cell} & 120 & 720 & 1200 & 600 & \small triangle & \small tetrahedron\tabularnewline
\midrule 
\emph{120-cell} & 600 & 1200 & 720 & 120 & \small pentagon & \small dodecahedron\tabularnewline
\midrule 
\textbf{$n$-polytopes} & \multicolumn{4}{c}{\textbf{any $j$-face}} & \multicolumn{2}{c}{\textbf{any $j$-face}}\tabularnewline
\midrule 
\emph{$n$-simplex} & \multicolumn{4}{c}{$\binom{n+1}{j+1}$} & \multicolumn{2}{c}{\small $j$-simplex}\tabularnewline
\midrule 
\parbox{2cm}{\centering\emph{$n$-crosspolytope}} & \multicolumn{4}{c}{$2^{j+1}\binom{n}{j+1}$} & \multicolumn{2}{c}{\small $j$-simplex}\tabularnewline
\midrule 
\emph{$n$-cube} & \multicolumn{4}{c}{$2^{n-j}\binom{n}{j}$} & \multicolumn{2}{c}{\small $j$-cube}\tabularnewline
\bottomrule
\end{longtable}

% Table 1- Footnotes- Text
\begin{center}
\begin{minipage}{.95\linewidth}
\renewcommand{\footnoterule}{}
\footnotetext[1]{\myfnmark{a} The shape of a 0-face is a point; the shape of a 1-face is a line segment.}
\end{minipage}
\end{center}
\endgroup
\bigskip
\noindent Lastly, a \emph{chord} of a regular $n$-polytope is being defined,
in this paper, as a line segment whose endpoints are vertices (i.e.,
0-faces) of the polytope. Thus, the chords of a regular polytope consist
of all of its edges and diagonals.

\section{FACT 1: Sum of Squared Chords}

The first 2-polytope fact stated that, for any regular 2-polytope
inscribed in a unit 2-sphere, the sum of its squared chord lengths
equals $E^{2}.$ To test generalization of this fact to all regular
$n$-polytopes ($n\geq 2$), we will make use of the following three lemmas and definition.

\subsection{\label{Sect:3.1}Preliminaries}
\begin{lem}
\label{Lemma:edge-length}Let $\mathcal{P}$ be a regular $n$-polytope
inscribed in a unit $n$-sphere, $n\geq2$, and let $x$ denote
the ratio of the $n$-sphere's radius $_{0}R$ to the polytope's half-edge
length $l$. Then the edge length $e$ of $\mathcal{P}$ is given by the
formula $e=\frac{2}{x}$.
\end{lem}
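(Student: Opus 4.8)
The plan is to obtain the formula by unwinding the definitions; essentially no geometry is needed beyond the meaning of ``inscribed in a unit $n$-sphere.''

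First I would record the one substantive point: since $\mathcal{P}$ is inscribed in a unit $n$-sphere, every vertex ($0$-face) of $\mathcal{P}$ lies on that sphere, so the sphere's radius ${}_0R$ is exactly $1$. That all the vertices do lie on a common sphere centered at $\mathbf{O}$ is guaranteed by the second clause of the definition of regularity, and the sphere in question is precisely the unit $n$-sphere named in the hypothesis, so its radius is $1$ by assumption.

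Next, invoking the definition of $x$ as the ratio ${}_0R/l$ and substituting ${}_0R=1$ gives $x=1/l$, hence the half-edge length is $l=1/x$. Finally, the edge length equals twice the half-edge length, so $e=2l=2/x$, which is the claimed formula.

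I expect no real obstacle: this lemma is a bookkeeping normalization that records the (dimension-independent) relation between circumradius, edge, and half-edge in the scaling used throughout the paper, and it will be applied repeatedly afterward. The genuine work is deferred to the later lemmas, where one must actually determine $x$ — that is, the circumradius-to-half-edge ratio — for each of the families (simplices, crosspolytopes, and cubes) and for the exceptional low-dimensional polytopes.
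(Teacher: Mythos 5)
Your proof is correct and follows essentially the same route as the paper's: both simply substitute ${}_0R=1$ into $x={}_0R/l$ and use $e=2l$ to get $e=2/x$. The only (immaterial) difference is the order of the two substitutions.
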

\begin{proof}
From the hypotheses, we have $x=\frac{_{0}R}{l}$. To find the edge
length $e$ (which is equal to $2l$), we first solve for $2l$ and
then replace $_{0}R$ with $1$ (since the $n$-sphere has a radius
of 1). We obtain: $e=2l=\frac{2_{0}R}{x}=\frac{2}{x}$.
\end{proof}
\begin{rem}
To use Lemma \ref{Lemma:edge-length} in our proofs, we will substitute
values (or formulas) for $x$ obtained from \cite{Coxeter1973} for
select polytopes. For the regular 3- and 4-polytopes, we will substitute
the values of $x$ that are listed in the tables of ``regular polyhedra
in ordinary space'' and ``regular polytopes in four dimensions''
in \cite{Coxeter1973} (i.e., Table Ii-ii on pp. 292-293)\footnote{The values of $x$ are referred to as the values of $\frac{_{0}R}{l}$
in \cite{Coxeter1973}.} and are also given in the cells marked with an asterisk in our Table
\ref{tab:Edge-Length}. For the regular $n$-simplices, $n$-crosspolytopes,
and $n$-hypercubes, we will first substitute $j=0$ into the formulas
for $\frac{_{j}R}{l}$ (the ratio of the \textquotedblleft radius
of $n$-sphere intersecting each $j$-face\textquoteright s center\textquotedblright{}
to \textquotedblleft half-edge length\textquotedblright ) listed in
the table of ``regular polytopes in $n$ dimensions'' in \cite{Coxeter1973}
(i.e., Table Iiii on pp. 294-295) to obtain formulas for $\frac{_{0}R}{l}$
(i.e., formulas for $x$) that we can then substitute into Lemma \ref{Lemma:edge-length}
to obtain the edge length $e$ for each of these three $n$-polytopes.
Each of these steps are shown in the last three rows on Table \ref{tab:Edge-Length}.

\end{rem}

% Making lettered footnotes
\begingroup
\makeatletter
\renewcommand\@makefntext[1]%
   {\noindent\makebox[1.8em][r]{\myfnmark}#1} 
\makeatother
\renewcommand{\thefootnote}{\alph{footnote}}

\begin{longtable}{ccccc}
\caption{\label{tab:Edge-Length}Edge Length, $e$, of Select Regular Polytopes.}
\tabularnewline
\hline
\midrule 
 & \textbf{Polytope} & $\frac{_{j}R}{l}$ & $\frac{_{0}R}{l}$ ( $=x$) & $e$\tabularnewline
\midrule
\endfirsthead
{\multirow{3}{*}{\textbf{3-polytopes}\footnotemark[1]}} & \emph{cube} & NA & $\sqrt{3}${*} & $\frac{2}{\sqrt{3}}$\tabularnewline
\cmidrule{2-5} 
 & \emph{icosahedron} & NA & $\sqrt{\tau\sqrt{5}}${*} & $\frac{2}{\sqrt{\tau\sqrt{5}}}$\tabularnewline
\cmidrule{2-5} 
 & \emph{dodecahedron} & NA & $\tau\sqrt{3}${*} & $\frac{2}{\tau\sqrt{3}}$\tabularnewline
\midrule 
\textbf{4-polytope} & \emph{24-cell} & NA & $2${*} & $1$\tabularnewline
\midrule 
\multirow{3}{*}{\textbf{$n$-polytopes}\footnotemark[2]} & \emph{$n$-simplex} & $\sqrt{\frac{2}{j+1}-\frac{2}{n+1}}$ & $\sqrt{2-\frac{2}{n+1}}$ & $\frac{2}{\sqrt{2-\frac{2}{n+1}}}$\tabularnewline
\cmidrule{2-5} 
 & \emph{$n$-crosspolytope} & $\sqrt{\frac{2}{j+1}}$ & $\sqrt{2}$ & $\sqrt{2}$\tabularnewline
\cmidrule{2-5} 
 & \emph{$n$-cube} & $\sqrt{n-j}$ & $\sqrt{n}$ & $\frac{2}{\sqrt{n}}$\tabularnewline
\bottomrule
\end{longtable}

% Table 2- Footnotes- Text
\begin{center}
\begin{minipage}{.95\linewidth}
\renewcommand{\footnoterule}{}
\footnotetext[1]{\myfnmark{a} In these rows (and throughout the paper), $\tau$ denotes the golden ratio $\frac{1+\sqrt{5}}{2}$.}
\footnotetext[2]{\myfnmark{b} Although Coxeter \cite{Coxeter1973} only states that his formulas
for $\frac{_{j}R}{l}$ are valid for $n\geq5$ (see his Table Iiii),
it is evident from his development of these formulas (see pp. 133-134,
158-159) that they are, in fact, valid for
$n\geq2$.}
\end{minipage}
\end{center}
\endgroup
\bigskip
\begin{lem}
\label{Lemma:mV/2}Let $\mathcal{P}$ be a regular polytope with $V$
vertices, and let $m_{i}$ be the number of chords of length $d_{i}$
emanating from a given vertex of $\mathcal{P}$. Then the total number
of chords of length $d_{i}$ is given by $N_{i}=\frac{m_{i}V}{2}$.
\end{lem}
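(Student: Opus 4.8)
The plan is to prove this by a double-counting argument applied to the set of incident (vertex, chord) pairs, restricted to chords of a fixed length $d_i$. Before counting, though, I would first check that the quantity $m_i$ is actually well-defined, i.e., that it does not depend on which vertex of $\mathcal{P}$ is chosen. This is where regularity enters: by the first consequence of regularity recalled in Section 2, for any two vertices of $\mathcal{P}$ there is a symmetry of $\mathcal{P}$ carrying the one to the other, and since every symmetry is an isometry it maps chords to chords of the same length. Consequently the multiset of lengths of chords emanating from a vertex is the same for all vertices, and in particular the number $m_i$ of chords of length $d_i$ at a vertex is independent of the vertex.

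Next I would set up the count. Let $S_i$ denote the set of ordered pairs $(\mathbf{P},C)$ in which $\mathbf{P}$ is a vertex of $\mathcal{P}$ and $C$ is a chord of length $d_i$ having $\mathbf{P}$ as one of its two endpoints. Enumerating $S_i$ by first choosing the vertex: there are $V$ vertices, and each lies on exactly $m_i$ chords of length $d_i$, so $|S_i| = m_i V$. Enumerating $S_i$ by first choosing the chord: there are $N_i$ chords of length $d_i$, and each has exactly two endpoints (both vertices), so each such chord contributes exactly two pairs to $S_i$, giving $|S_i| = 2N_i$. Equating the two enumerations yields $m_i V = 2N_i$, whence $N_i = \frac{m_i V}{2}$, which is the assertion of the lemma.

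I do not anticipate a genuine obstacle in this argument; it is the usual handshake-type count. The one place that merits explicit attention is the well-definedness of $m_i$, which is precisely the step that uses vertex-transitivity of the symmetry group of a regular polytope — without regularity the statement would not even be meaningful, since distinct vertices could carry different profiles of chord lengths. It may also be worth noting in passing that the identity $m_i V = 2N_i$ forces $m_i V$ to be even, a small consistency check on the formula.
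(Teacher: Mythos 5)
Your proposal is correct and follows essentially the same route as the paper's proof: both use vertex-transitivity (the symmetry consequence of regularity) to see that every vertex has $m_i$ chords of length $d_i$ emanating from it, and then divide by two because each chord is counted once from each of its two endpoints. Your version merely formalizes this as an explicit double count of incident (vertex, chord) pairs and makes the well-definedness of $m_i$ more explicit, which is a minor presentational difference rather than a different argument.
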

\begin{proof}
Let $\mathbf{P_{1},P_{2},\ldots,P_{V}}$ be the vertices of $\mathcal{P}$.
Without loss of generality, consider the vertex $\mathbf{P_{1}}$.
If vertex $\mathbf{P_{1}}$ has $m_i$ chords of length $d_{i}$ emanating
from it, then, by the mappings induced by regularity, each of the
$V$ vertices of the polytope also has $m_i$ chords of length $d_{i}$
emanating from it. However, since the chord $\mathbf{P_{j}P_{j'}}$
is equal to the chord $\mathbf{P_{j'}P_{j}}$ for all distinct $j,j'\in\left\{ 1,2,\ldots,V\right\} $,
this yields a total of only $\frac{m_iV}{2}$ chords of length $d_{i}$. 
\end{proof}
\begin{lem}
\label{lem:N=00003DV(V-1)/2}Let $\mathcal{P}$ be a regular polytope
with $V$ vertices and $N$ total chords. Then $N=\frac{V\left(V-1\right)}{2}$.
\end{lem}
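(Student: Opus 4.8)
The plan is to recognize this as a repackaging of the elementary fact that a segment is determined by its unordered pair of endpoints. By definition a chord of $\mathcal{P}$ is a segment joining two distinct vertices, and conversely every $2$-element subset of the vertex set spans exactly one such segment; hence the chords are in bijection with the $2$-element subsets of the vertices, and $N=\binom{V}{2}=\frac{V(V-1)}{2}$.

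To phrase this in a way that meshes with the apparatus already in place, I would instead derive it from Lemma \ref{Lemma:mV/2}. Fix a vertex $\mathbf{P_1}$, let $d_1,\ldots,d_k$ be the distinct chord lengths of $\mathcal{P}$, and let $m_i$ denote the number of chords of length $d_i$ emanating from $\mathbf{P_1}$. Each of the other $V-1$ vertices is joined to $\mathbf{P_1}$ by exactly one chord, whose length is one of $d_1,\ldots,d_k$, so $\sum_{i=1}^{k} m_i=V-1$. Summing the conclusion $N_i=\frac{m_iV}{2}$ of Lemma \ref{Lemma:mV/2} over $i$ then gives
\[
N=\sum_{i=1}^{k}N_i=\frac{V}{2}\sum_{i=1}^{k}m_i=\frac{V(V-1)}{2}.
\]

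I do not anticipate any genuine obstacle: the only thing to check is that distinct unordered pairs of vertices yield distinct chords, so that no double counting occurs in ``$\sum_i m_i=V-1$'' or in the passage through Lemma \ref{Lemma:mV/2}; this is immediate, since the endpoint pair of a segment is unique. (One might momentarily worry about three collinear vertices, but that cannot occur among the vertices of a convex polytope, and in any case it would not affect the count of chords-as-segments.) The entire content is bookkeeping, and the only real choice is whether to present the one-line bijective argument or the slightly longer derivation via Lemma \ref{Lemma:mV/2}; I would include both, leading with the latter since it is the form the later sections reuse.
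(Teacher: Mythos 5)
Your first paragraph is exactly the paper's proof: the paper disposes of this lemma in one line by identifying chords with unordered pairs of distinct vertices, giving $N=\binom{V}{2}$. The alternative derivation via Lemma \ref{Lemma:mV/2} is also valid but is extra machinery the paper does not use here; either version is fine.
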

\begin{proof}
Since each chord in $\mathcal{P}$ corresponds to a single pair of
distinct vertices, the total number of chords $N$ is equal to the
total number of pairs of distinct vertices, i.e., $N=\binom{V}{2}=\frac{V\left(V-1\right)}{2}$.
\end{proof}
\begin{defn}
An $n$-polytope is called \emph{centrally symmetric} if there exists
a symmetry of the polytope that transforms each point $\left(x_{1},\;x_{2},\;\ldots\,,\;x_{n}\right)$
of the polytope into the point $\left(-x_{1},\;-x_{2},\;\ldots\,,\;-x_{n}\right)$
(cf. \cite{Coxeter1973,Grunbaum}).
\end{defn}
\noindent A useful property of centrally-symmetric polytopes is that, for each
vertex $\mathbf{P}$, there is a unique vertex $\mathbf{P}'$ lying
diametrically directly opposite to it (cf. \cite{Coxeter1973}). All regular $n$-polytopes, $n\geq2$, are centrally
symmetric except for the odd-edged polygons and simplices \cite{Coxeter1973}.
\subsection{\label{subsec:Generalizing-Fact-1}Generalizing Fact 1}
\begin{thm}
\label{Thm:sum-of-all-chords-Vsquared}Let $\mathcal{P}$ be any regular
$n$-polytope inscribed in a unit $n$-sphere (where $n \geq 2$). Then
\[\sum_{i=1}^{N}c_{i}^{2}=V^{2}\]
where $c_{i}$ is the length of each $i$\textsuperscript{th} chord
of $\mathcal{P}$, $N$ is the total number of chords, and $V$ is
the total number of vertices.
\end{thm}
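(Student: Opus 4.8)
The plan is to treat the vertices as position vectors on the unit $n$-sphere and reduce the left-hand side to a single inner-product identity. Write the vertices of $\mathcal{P}$ as $\mathbf{P}_1,\mathbf{P}_2,\ldots,\mathbf{P}_V$, regarded as vectors issuing from the center $\mathbf{O}$ (the origin); since $\mathcal{P}$ is inscribed in a unit $n$-sphere we have $\mathbf{P}_k\cdot\mathbf{P}_k=1$ for every $k$. Each chord is a segment $\mathbf{P}_j\mathbf{P}_{j'}$ with $j\neq j'$, and expanding the squared length gives $|\mathbf{P}_j-\mathbf{P}_{j'}|^2 = 2 - 2\,\mathbf{P}_j\cdot\mathbf{P}_{j'}$. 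Since every unordered pair of distinct vertices determines exactly one chord (this is precisely the counting behind Lemma \ref{lem:N=00003DV(V-1)/2}), summing over all chords yields
\[
\sum_{i=1}^{N}c_i^2 \;=\; \sum_{1\le j<j'\le V}\bigl(2 - 2\,\mathbf{P}_j\cdot\mathbf{P}_{j'}\bigr) \;=\; 2\binom{V}{2} - 2\sum_{1\le j<j'\le V}\mathbf{P}_j\cdot\mathbf{P}_{j'}.
\]

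The key step is to show the vertex centroid coincides with the center, i.e. $\sum_{k=1}^{V}\mathbf{P}_k = \mathbf{0}$. This follows from regularity: by the first consequence of regularity recalled in Section 2, each vertex can be sent to any other vertex by a symmetry of $\mathcal{P}$, so the symmetry group permutes the vertices and therefore fixes their centroid; but the only point fixed by the full symmetry group of a regular polytope is its center $\mathbf{O}$. Squaring the zero vector then gives
\[
0 \;=\; \Bigl|\sum_{k=1}^{V}\mathbf{P}_k\Bigr|^2 \;=\; \sum_{k=1}^{V}\mathbf{P}_k\cdot\mathbf{P}_k + 2\sum_{1\le j<j'\le V}\mathbf{P}_j\cdot\mathbf{P}_{j'} \;=\; V + 2\sum_{1\le j<j'\le V}\mathbf{P}_j\cdot\mathbf{P}_{j'},
\]
whence $\sum_{1\le j<j'\le V}\mathbf{P}_j\cdot\mathbf{P}_{j'} = -V/2$. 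Substituting this into the earlier display and using $\binom{V}{2} = V(V-1)/2$ gives $\sum_{i=1}^{N}c_i^2 = V(V-1) + V = V^2$, which is the desired identity.

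I expect the single genuine obstacle to be the justification that $\sum_k\mathbf{P}_k=\mathbf{0}$; the remainder is a routine expansion. The symmetry-group argument sketched above is uniform across all dimensions and all regular types, so I would use it. An alternative, if one prefers to avoid invoking the fixed-point property of the symmetry group, is to pair each vertex $\mathbf{P}$ with the antipodal vertex $-\mathbf{P}$ using central symmetry (valid for every regular $n$-polytope except odd-edged polygons and simplices), which makes the vanishing of the sum immediate, and then to treat the finitely many remaining families\textemdash odd regular polygons and the simplices\textemdash by a direct coordinate computation. The first approach is cleaner, so that is the one I would present.
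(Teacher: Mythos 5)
Your proof is correct, and it takes a genuinely different route from the paper's. The paper argues by cases: for centrally symmetric polytopes it applies the Law of Cosines to each triangle $\triangle\mathbf{P_0}\mathbf{O}\mathbf{P_i}$, invokes Lemmas \ref{Lemma:mV/2} and \ref{lem:N=00003DV(V-1)/2} to organize the count, and then shows $\sum N_i\cos\theta_i=0$ by pairing each central angle with a supplementary one via the antipodal vertex; odd-edged polygons are handled by citing Fact 1, and simplices by a direct computation of $E$ and $e$ from Tables \ref{tab:j-Face-Cardinality-and-Shape} and \ref{tab:Edge-Length}. Your argument replaces all of this with the single identity $c^2=2-2\,\mathbf{P}_j\cdot\mathbf{P}_{j'}$ and the vanishing of $\sum_k\mathbf{P}_k$, which is uniform across all families and dimensions and makes the cancellation that the paper establishes angle-by-angle into an immediate consequence of $\bigl|\sum_k\mathbf{P}_k\bigr|^2=0$. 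The one step you should flesh out is the claim that the only point fixed by the full symmetry group is $\mathbf{O}$: a clean elementary justification is that if $\mathbf{v}=\sum_k\mathbf{P}_k\neq\mathbf{0}$, then vertex-transitivity forces $\mathbf{v}\cdot\mathbf{P}_j$ to take the same value for every $j$, so all vertices would lie in a single $(n-1)$-plane, contradicting the fact that an $n$-polytope's vertices affinely span $n$-space. With that line added, your proof is shorter and arguably more illuminating than the paper's, at the cost of using vector algebra where the paper stays within the synthetic/trigonometric framework it has set up; the paper's case analysis does, however, produce intermediate facts (the supplementary-angle pairing and the explicit simplex computation) that it reuses in the proofs of Theorems \ref{Thm:sum-of-distinct-chords} and \ref{Thm:3D-product-of-all}.
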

\begin{proof} A regular $n$-polytope, where $n \geq 2$, is either centrally symmetric
or \emph{not} centrally symmetric (in the latter case, it is either
an odd-edged polygon or an $n$-simplex). Thus, we consider three cases: (a) $\mathcal{P}$ is centrally symmetric,
(b) $\mathcal{P}$ is an odd-edged polygon, or (c) $\mathcal{P}$ is an $n$-simplex.

\emph{Case 1 ($\mathcal{P}$ is centrally symmetric).} Let $k$ be the number of \emph{distinct} chords lengths of $\mathcal{P}$.  Without loss of generality,
chose a vertex \textbf{$\mathbf{P}_{\mathbf{0}}$} of the polytope.
Let \textbf{$\mathbf{P_{i}}$} for $i=1,2,...,k$ be $k$ other vertices
of the polytope such that $\mathbf{P}_{0}$ and $\mathbf{P_{i}}$
form the endpoints of $k$ chords each having a distinct length $d_{i}$
and $d_{1}<d_{2}<\cdots<d_{k}$. We now find a formula (or value)
for each of these $k$ distinct lengths.

Observe that\textemdash except in the case where both $\mathbf{P}_{0}$
and $\mathbf{P_{i}}$ are collinear with the center $\mathbf{O}$
of the unit $n$-sphere\textemdash each chord $\mathbf{P_0P_i}$ 
is the side of a triangle $\triangle \mathbf{P}_{0} \mathbf{O} \mathbf{P_{i}}$.
Moreover, the other two sides of the triangle ($\mathbf{OP_{0}}$
and $\mathbf{OP_{i}}$) form radii of the $n$-sphere and so have length 1. Applying the Law of Cosines, we have $d_{i}^{2}=1^{2}+1^{2}-2\left(1\right)\left(1\right)\cos\theta_{i}=2-2\cos\theta_{i}$
where $\theta_{i}$ is the angle between the two unit-long sides of
the triangle.

When both $\mathbf{P}_{0}$
and $\mathbf{P_{i}}$ are collinear with $\mathbf{O}$, the chord $\mathbf{P_{0}P_{i}}$ is a diameter of the
unit $n$-sphere and so has length 2. Since a diameter is the \emph{longest}
possible chord of an $n$-sphere, $d_{k}=2$. Furthermore, only one diameter
can emanate from $\mathbf{P_{0}}$, so Lemma \ref{Lemma:mV/2} reveals that $\mathcal{P}$ has a total
of $\frac{1V}{2}$ chords of length $d_{k}$.

Letting $N_{i}$ be the number of chords of length $d_{i}$ ($i=1,2,...,k$), we have
\[
\sum_{i=1}^{N}c_{i}^{2}=\sum_{i=1}^{k}N_{i}d_{i}^{2}=\sum_{i=1}^{k-1}N_{i}\left(2-2\cos\theta_{i}\right)+N_{k}d_{k}^{2}=
\]
\[
2\sum_{i=1}^{k-1}N_{i}-2\sum_{i=1}^{k-1}N_{i}\cos\theta_{i}+\frac{V}{2}\left(2^{2}\right)=
2\left(\sum_{i=1}^{k}N_{i}-\frac{V}{2}\right)-2\sum_{i=1}^{k-1}N_{i}\cos\theta_{i}+2V.
\]
Since $\sum_{i=1}^{k}N_{i}$ must equal the total number of chords
(i.e., $N$) and since $N=\frac{V\left(V-1\right)}{2}$ by Lemma \ref{lem:N=00003DV(V-1)/2},
we have:
\[
\sum_{i=1}^{N}c_{i}^{2}=2\left[\frac{V\left(V-1\right)}{2}-\frac{V}{2}\right]-2\sum_{i=1}^{k-1}N_{i}\cos\theta_{i}+2V=V^{2}-2\sum_{i=1}^{k-1}N_{i}\cos\theta_{i}.
\]
Next, we show that $\sum_{i=1}^{k-1}N_{i}\cos\theta_{i}=0$. Since
$\mathcal{P}$ is centrally symmetric, each vertex $\mathbf{P}_{\mathbf{i}}$
has an opposite vertex $\mathbf{P_{i}'}$ for $i=0,1,...,k$. Thus, for $i\in\left\{1,2,...,k-1\right\}$,
$\mathbf{P_{0}P_{k}}$ and $\mathbf{P_{i}P_{i}'}$
intersect at $\mathbf{O}$ to form vertical angles $\angle\mathbf{P_{0}OP_{i}}$
and $\angle\mathbf{P_{k}OP_{i}'}$ .
Hence, these two angles are congruent. Furthermore, observe that $\angle\mathbf{P_{0}OP_{i}'}$ is the supplement of $\angle\mathbf{P_{0}OP_{i}}$
(which has measure $\theta_{i}$). Thus, $\angle\mathbf{P_{0}OP_{i}'}$
has measure $\pi-\theta_{i}$. However, the line segment \textbf{$\mathbf{P_{0}P_{i}'}$}
(lying opposite to the angle $\angle\mathbf{P_{0}OP_{i}'}$) is a
side of the triangle $\triangle\mathbf{P_{0}OP_{i}'}$ (whose other
sides, $\mathbf{OP_{0}}$ and $\mathbf{OP_{i}'}$, are radii of length
1) and, moreover, must be the same length as one of the original\emph{
$\mathbf{P_{0}P_{i}}$} for $i=1,2,...,k-1$ (because these $k-1$
chords are of all possible distinct chord lengths\textemdash except
for that of the unique diameter emanating from $\mathbf{P_{0}}$\textemdash and
we know $\mathbf{P_{0}P_{i}'}\neq\mathbf{P_{0}P_{k}}$). Hence, by
side-side-side, $\triangle\mathbf{P_{0}OP_{i}'}$ is congruent to
one of the ``original triangles $\triangle\mathbf{P_{0}OP_{i}}$.''
Thus, the ``original $\mathbf{P_{0}P_{i}}$'' (i.e., the one that
is the same length as $\mathbf{P_{0}P_{i}'}$) must lie opposite to
one of the ``original angles $\angle\mathbf{P_{0}OP_{i}}$,'' \emph{which
must have the same measure as $\angle\mathbf{P_{0}OP_{i}'}$ (i.e.,
$\pi-\theta_i$)}. Thus, we have shown that, for each angle of measure
$\theta_{i}$ contained in the set $\left\{ \angle\mathbf{P_{0}OP_{i}}:\:i=1,2,...,k-1\right\} $,
there is an angle (not necessarily distinct from the angle of measure
$\theta_{i}$) that is also contained in this set and has measure
$\pi-\theta_{i}$. Moreover, since we have shown that, for every chord
of length $d_{i}$ (corresponding to an angle of measure $\theta_{i}$)
that emanates from $\mathbf{P_{0}}$, there is a chord of length $d_{i}'$
(corresponding to an angle of measure $\pi-\theta_{i}$) that also
emanates from $\mathbf{P_{0}}$, Lemma \ref{Lemma:mV/2} tells us
that there must be the same number $N_{i}$ of chords of length $d_{i}$
as chords of length $d_{i}'$. Call the latter number $N_{i}'$.

Returning to $\sum_{i=1}^{k-1}N_{i}\cos\theta_{i}$, this shows that,
if the angle $\angle\mathbf{P_{0}OP_{i}}$ of measure $\theta_{i}$
and its supplement (the one contained in the set $\left\{ \angle\mathbf{P_{0}OP_{i}}:\:i=1,2,...,k\right\} $)
are distinct angles, we have $N_{i}\cos\theta_{i}+N_{i}'\cos\left(\pi-\theta_{i}\right)=N_{i}\cos\theta_{i}+N_{i}\cos\left(\pi-\theta_{i}\right)=N_{i}\left(\cos\theta_{i}-\cos\theta_{i}\right)=0$
and these two angles contribute nothing to $\sum_{i=1}^{k-1}N_{i}\cos\theta_{i}$.
If the angle of measure $\theta_{i}$ and its supplement are actually
the same angle, then $\theta_{i}=\frac{\pi}{2}$ and $N_{i}\cos\left(\frac{\pi}{2}\right)=0$.
Thus, this angle also contributes nothing to $\sum_{i=1}^{k-1}N_{i}\cos\theta_{i}$.
Therefore, we see that $\sum_{i=1}^{k-1}N_{i}\cos\theta_{i}=0$ and
$\sum_{i=1}^{N}c_{i}^{2}=V^{2}.$

\emph{Case 2 ($\mathcal{P}$ is an odd-edged polygon).} Since $\mathcal{P}$ is a regular polygon, Fact 1 tells is $\sum_{i=1}^{N}c_{i}^{2}=E^{2}$. Since the number of vertices, $V$,
of an $E$-edged polygon is equal to $E$ \cite{CoxeterGreitzer},
we have: \emph{$\sum_{i=1}^{N}c_{i}^{2}=V^{2}$.}

\emph{Case 3 ($\mathcal{P}$ is an $n$-simplex).} A simplex has no diagonals
(since all of its vertices are connected by
edges; cf. \cite{Coxeter1973}), thus, all of the chords of $\mathcal{P}$ are
edges. An edge is a 1-face, so the number of edges $E$ is obtained
by letting $j=1$ in the cardinality formula for the $j$-faces of $n$-simplices given
in Table \ref{tab:j-Face-Cardinality-and-Shape}:
\[
E=\binom{n+1}{j+1}=\binom{n+1}{2}=\frac{\left(n+1\right)n}{2}.
\]
The edge length $e$ of an $n$-simplex is $2\left(2-\frac{2}{n+1}\right)^{-1/2}$ (see Table \ref{tab:Edge-Length}). We have:
$$
\sum_{i=1}^{N}c_{i}^{2}=Ee^{2}=\frac{\left(n+1\right)n}{2}\left(\frac{4}{2-\frac{2}{n+1}}\right)=\left(n+1\right)^{2}.
$$
Since the vertices are 0-faces, the number of vertices $V$ of $\mathcal{P}$ can be seen to be $n+1$ from Table \ref{tab:j-Face-Cardinality-and-Shape}.
Therefore, $\sum_{i=1}^{N}c_{i}^{2}=(n+1)^{2}=V^{2}$.
\end{proof}
\begin{cor}
\label{DualCor:sum-of-all-chords}Let $\mathcal{Q}$ be any regular
$n$-polytope circumscribed about a unit $n$-sphere (where $n \geq 2$).
Then $\sum_{i=1}^{N}s_{i}^{2}=F^{2}$ where $s_{i}$ is the length
of each $i$\textsuperscript{th} line segment whose endpoints are
centers of facets of $\mathcal{Q}$, $N$ is the number of these line
segments, and $F$ is the number of facets of $\mathcal{Q}$.
\end{cor}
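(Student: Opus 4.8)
The plan is to deduce this corollary from Theorem~\ref{Thm:sum-of-all-chords-Vsquared} by reciprocating $\mathcal{Q}$ with respect to the unit $n$-sphere. Since $\mathcal{Q}$ is a regular $n$-polytope ($n\geq 2$) circumscribed about the unit $n$-sphere, the reciprocation property recalled in the preliminaries produces a regular $n$-polytope $\mathcal{P}$ inscribed in that same unit $n$-sphere, and this reciprocation carries the facets (the $(n-1)$-faces) of $\mathcal{Q}$ bijectively onto the vertices (the $0$-faces) of $\mathcal{P}$. In particular, $\mathcal{P}$ has exactly $F$ vertices, where $F$ is the number of facets of $\mathcal{Q}$.

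The crux of the argument is to show that the line segments named in the statement are precisely the chords of $\mathcal{P}$. Because $\mathcal{Q}$ is regular, its inscribed sphere\,---\,here, the unit $n$-sphere\,---\,is tangent to each facet of $\mathcal{Q}$ at the center of that facet; hence each facet of $\mathcal{Q}$ lies in a supporting hyperplane at distance $1$ from $\mathbf{O}$, and the foot of the perpendicular dropped from $\mathbf{O}$ to that hyperplane is simultaneously the point of tangency and the facet's center. Reciprocation with respect to the unit sphere sends such a hyperplane (at distance $1$ from $\mathbf{O}$) back to the point lying at distance $1$ along the same perpendicular, i.e., to that very foot of the perpendicular. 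So the vertex of $\mathcal{P}$ associated to a given facet of $\mathcal{Q}$ coincides with the center of that facet; consequently the set of vertices of $\mathcal{P}$ is exactly the set of facet centers of $\mathcal{Q}$, and the segments joining pairs of facet centers of $\mathcal{Q}$ are exactly the chords of $\mathcal{P}$ (so that $N$ here matches the chord count of $\mathcal{P}$, consistent with Lemma~\ref{lem:N=00003DV(V-1)/2}, and the multiset $\{s_i\}$ equals the multiset $\{c_i\}$).

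Given this, I would finish by applying Theorem~\ref{Thm:sum-of-all-chords-Vsquared} to $\mathcal{P}$: $\sum_{i=1}^{N} s_i^2 = \sum_{i=1}^{N} c_i^2 = V^2$, where $V$ is the number of vertices of $\mathcal{P}$; and since $V = F$, this gives $\sum_{i=1}^{N} s_i^2 = F^2$, as required. I expect the only delicate point to be the geometric claim in the second paragraph\,---\,that for a \emph{regular} circumscribed polytope the facet center is exactly the point where the insphere touches the facet (rather than merely some interior point of the facet)\,---\,which is what guarantees that the facet centers lie on the unit sphere and are fixed by the reciprocation; it follows from the symmetry of $\mathcal{Q}$ acting transitively on its facets. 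The remaining steps are routine transcription through the duality.
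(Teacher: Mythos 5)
Your proposal is correct and follows essentially the same route as the paper: reciprocate $\mathcal{Q}$ in the unit $n$-sphere, identify the facet centers of $\mathcal{Q}$ with the vertices of the inscribed reciprocal polytope $\mathcal{P}$ (so the named segments are exactly the chords of $\mathcal{P}$), and apply Theorem~\ref{Thm:sum-of-all-chords-Vsquared} together with $F=V$. The only difference is that you justify the tangency-point/facet-center identification explicitly, whereas the paper simply cites Coxeter for it.
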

\begin{proof} The centers of the facets of $\mathcal{Q}$
are also the vertices of a reciprocal regular $n$-polytope $\mathcal{P}$
of $\mathcal{Q}$ with respect to the unit $n$-sphere about which $\mathcal{Q}$ is circumscribed (cf. \cite{Coxeter1973}).
Since the vertices of $\mathcal{P}$ are the centers of the facets
$\mathcal{Q}$, the chords of $\mathcal{P}$ are the line segments
whose endpoints are the centers of facets of $\mathcal{Q}$. Hence,
the lengths of these line segments ($s_{i}$ for $i=1,2,\ldots,N$)
are also the lengths of the chords of $\mathcal{P}$. Thus, by Theorem
\ref{Thm:sum-of-all-chords-Vsquared}, we have $\sum_{i=1}^{N}s_{i}^{2}=V^{2}$ where $V$ is the number
of vertices of $\mathcal{P}$. Moreover, since every facet of $\mathcal{Q}$
corresponds to a vertex of $\mathcal{P}$ (and vice versa), the number
of facets $F$ of $\mathcal{Q}$ is the same as the number of vertices
$V$ of $\mathcal{P}$. Therefore, we have $\sum_{i=1}^{N}s_{i}^{2}=F^{2}$
for $\mathcal{Q}$. \end{proof}

\subsection{Discussion}
Theorem \ref{Thm:sum-of-all-chords-Vsquared} shows that Fact 1 does,
in fact, apply to \emph{all} regular $n$-polytopes where $n \geq 2$\textemdash \emph{if
the \textquotedblleft number of edges\textquotedblright{} ($E$) mentioned
in Fact 1 is reinterpreted as \textquotedblleft number of vertices\textquotedblright{}
($V$).} This reinterpretation is valid since, for 2-polytopes, $E=V$.

\section{FACT 2: Sum of Squared Distinct Chords}
The second 2-polytope fact stated that, for any regular 2-polytope
inscribed in a unit 2-sphere, the sum of its squared distinct chord
lengths equals (a) $E$ (when $E$ is odd) and (b) some integer (when
$E$ is even). To test generalization of this fact to all regular
$n$-polytopes ($n \geq 2$), we use the following lemmas.

\subsection{Preliminaries}
\begin{lem}
\label{Lemma-=0000233}Let $\mathcal{P}$ be a regular 2-polytope
with $E$ edges and $k$ chords of distinct lengths.  If $E$ is odd, then $E=2k+1$.
\end{lem}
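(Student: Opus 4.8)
The plan is to work with an explicit coordinatization of the regular $E$-gon and count the distinct values of the chord-length function directly. Place $\mathcal{P}$ so that its vertices are $\mathbf{P}_j=\left(\cos\frac{2\pi j}{E},\,\sin\frac{2\pi j}{E}\right)$ for $j=0,1,\dots,E-1$ on the unit circle, with center $\mathbf{O}$ at the origin. By the symmetry consequences of regularity, every chord length of $\mathcal{P}$ is realized by a chord emanating from the fixed vertex $\mathbf{P}_0$, so it suffices to count the distinct lengths among the chords $\mathbf{P}_0\mathbf{P}_j$ for $j=1,\dots,E-1$. Applying the Law of Cosines to $\triangle\mathbf{P}_0\mathbf{O}\mathbf{P}_j$ exactly as in the proof of Theorem \ref{Thm:sum-of-all-chords-Vsquared} gives $\lvert\mathbf{P}_0\mathbf{P}_j\rvert^2 = 2-2\cos\frac{2\pi j}{E}$.

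Next I would record the two elementary facts about $f(j):=2-2\cos\frac{2\pi j}{E}$ that drive the count. First, $f(j)=f(E-j)$, since $\cos$ is even and $2\pi$-periodic; because $E$ is odd the index $j=E/2$ never occurs, so the indices $1,\dots,E-1$ split into the $(E-1)/2$ unordered pairs $\{j,\,E-j\}$ with $1\le j\le (E-1)/2$, showing there are \emph{at most} $(E-1)/2$ distinct chord lengths. Second, for $1\le j\le (E-1)/2$ the angle $\frac{2\pi j}{E}$ lies in the open interval $(0,\pi)$ (the largest such angle being $\frac{\pi(E-1)}{E}<\pi$), on which $\cos$ is strictly decreasing; hence $f(1)<f(2)<\cdots<f\!\left(\tfrac{E-1}{2}\right)$, so these $(E-1)/2$ values are pairwise distinct and the bound is attained.

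Combining the two observations, the number $k$ of distinct chord lengths of $\mathcal{P}$ equals $(E-1)/2$, which rearranges to $E=2k+1$, as claimed. I do not anticipate a genuine obstacle here; the only points that need a little care are the appeal to regularity to reduce to chords through a single vertex (already licensed by the symmetry property used repeatedly above) and the observation that oddness of $E$ is precisely what removes the would-be diameter term $j=E/2$ — the same phenomenon that will distinguish the odd and even cases throughout this section.
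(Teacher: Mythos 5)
Your proof is correct, but it follows a genuinely different route from the paper's. You coordinatize the $E$-gon on the unit circle, reduce by vertex-transitivity to chords emanating from $\mathbf{P}_0$, and compute $k$ outright: the squared lengths are $f(j)=2-2\cos\frac{2\pi j}{E}$, the identity $f(j)=f(E-j)$ (with no fixed index $j=E/2$ because $E$ is odd) caps the count at $(E-1)/2$, and strict monotonicity of cosine on $(0,\pi)$ shows that bound is attained, giving $k=(E-1)/2$ exactly. The paper instead argues synthetically: for odd $E$ the line through the center and $\mathbf{P}_0$ is an axis of symmetry containing no other vertex, reflection across it pairs the remaining $V-1$ vertices into couples equidistant from $\mathbf{P}_0$, and since that reflection is the only non-identity symmetry fixing $\mathbf{P}_0$, each distinct distance is realized by exactly two vertices, whence $V-1=2k$ and $E=V=2k+1$. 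What your version buys is self-containedness: the fact that distinct reflection pairs yield distinct lengths is \emph{proved} by the monotonicity of $\cos$ on $(0,\pi)$ rather than inferred from the structure of the vertex stabilizer (a step the paper supports by citation and by the implicit geometric fact that two points of a circle equidistant from a fixed point of that circle are mirror images across the relevant diameter). What the paper's version buys is a coordinate-free argument in the same synthetic style as its treatment of supplementary central angles elsewhere, with the vertices $\mathbf{P}_i'$ it constructs reused conceptually in later proofs. Both arguments isolate the same role of oddness --- the absence of a diametrically opposite vertex --- and both are sound.
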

\begin{proof}
Let $\mathbf{O}$ be the center of $\mathcal{P}$, let $\mathbf{P_{0}}$
be a vertex of $\mathcal{P}$, and let $\mathbf{P_{i}}$ for $i=1,2,...,k$
be $k$ other vertices of $\mathcal{P}$ such that $\mathbf{P}_{0}$
and $\mathbf{P_{i}}$ form the endpoints of $k$ chords each having
a distinct length $d_{i}$. Since $E$ is odd, the line $l$ that
passes through $\mathbf{O}$ and $\mathbf{P_{0}}$ is an axis of symmetry
of $\mathcal{P}$ (cf. \cite{AgricolaFriendrich}). Furthermore,
only one of the vertices of $\mathcal{P}$ lies on $l$ (i.e., $\mathbf{P_{0}};$
cf. \cite{AgricolaFriendrich}). If we reflect $\mathcal{P}$
across the line $l$, we see that, for each $\mathbf{P_{i}}$ where $i \neq 0$,
there is another vertex, call it $\mathbf{P_{i}'}$, that is the same
distance $d_{i}$ from $\mathbf{P_{0}}$ (because it is now in the
same location as $\mathbf{P_{i}}$ had been before reflecting $\mathcal{P}$
and $\mathbf{P_{0}}$ has not moved). Since there
are no other (non-identity) symmetries of $\mathcal{P}$ that hold
$\mathbf{P_{0}}$ constant, this shows that these two vertices, \textbf{$\mathbf{P_{i}}$}
and $\mathbf{P_{i}'}$, are the only two vertices that are a distance
of $d_{i}$ away from $\mathbf{P_{0}}$ for each $i\in\left\{ 1,2,...,k\right\} $.
Thus, $\mathcal{P}$ has $2k$ vertices besides $\mathbf{P_{0}}$ and
the total number of vertices $V$ of $\mathcal{P}$
is $2k+1$. Since $E=V$ for 2-polytopes, we have $E=2k+1$.
\end{proof}
\begin{lem}
\label{lem:odd-edged-non-simplex-dim1or2}Let $\mathcal{P}$ be a
regular $n$-polytope, $n \geq 2$, that is
\emph{not} a simplex of dimension $n\geq3$. If $\mathcal{P}$ has an odd number of edges $E$, then $n=2$.
\end{lem}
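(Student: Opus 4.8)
The plan is to prove the contrapositive: every regular $n$-polytope with $n\ge 3$ that is not a simplex has an \emph{even} number of edges. Granting this, an odd edge count $E$ rules out $n\ge 3$; since $n\ge 2$ is assumed, this leaves $n=2$, which is the desired conclusion. Note the hypothesis only excludes simplices of dimension $n\ge 3$, so for $n=2$ (regular polygons, including the triangle, which is the $2$-simplex) there is nothing to prove.

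The argument will rest entirely on the classification of regular polytopes recorded in Table~\ref{tab:j-Face-Cardinality-and-Shape}. Assuming $n\ge 3$ and that $\mathcal{P}$ is not a simplex, I would split into the three blocks of that table and read off (or compute) the number of $1$-faces in each. For $n=3$, the non-simplex regular $3$-polytopes are the octahedron, cube, icosahedron, and dodecahedron, with $12,12,30,30$ edges respectively — all even. For $n=4$, the non-simplex regular $4$-polytopes are the $16$-cell, $8$-cell, $24$-cell, $600$-cell, and $120$-cell, with $24,32,96,720,1200$ edges — again all even. For $n\ge 5$ the only non-simplex regular $n$-polytopes are the $n$-crosspolytope and the $n$-cube; substituting $j=1$ into their $j$-face cardinality formulas gives $2^{2}\binom{n}{2}=2n(n-1)$ edges for the crosspolytope and $2^{n-1}\binom{n}{1}=n\,2^{n-1}$ edges for the cube, both patently even. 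Hence in every case with $n\ge 3$ the edge count is even, contradicting the hypothesis that $E$ is odd, so $n=2$.

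It is worth recording explicitly why higher-dimensional simplices must be excluded: the $n$-simplex has $E=\binom{n+1}{2}=\frac{n(n+1)}{2}$ edges, which is odd precisely when $n\equiv 1$ or $2\pmod 4$ (for instance, the $5$-simplex has $15$ edges), so the lemma genuinely fails for those polytopes. An alternative packaging of the case check would use Lemma~\ref{Lemma:mV/2}, writing $E=\frac{m_{1}V}{2}$ with $m_{1}$ the common vertex degree, and verifying that for each non-simplex of dimension $\ge 3$ either $4\mid V$ or $m_{1}$ is even; but this amounts to the same bookkeeping.

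I do not expect a genuine obstacle here, only a point requiring care: because high-dimensional simplices really do have odd edge counts, there is no purely ``structural'' shortcut — any proof must invoke a feature separating non-simplices from simplices, and the parity of $E$ is exactly such a feature, pinned down by the finite enumeration together with the two closed-form families. So the things to be careful about are the completeness of the enumeration, the elementary parity computations for the crosspolytope and cube, and (for $n\ge 5$) the validity of Coxeter's cardinality formulas as already noted in the remark following Table~\ref{tab:Edge-Length}.
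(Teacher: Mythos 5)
Your proposal is correct and follows essentially the same route as the paper: both arguments read the edge counts of the non-simplex regular polytopes off Table \ref{tab:j-Face-Cardinality-and-Shape} (the finite lists in dimensions $3$ and $4$, plus the closed-form counts $2n(n-1)$ and $n\,2^{n-1}$ for the crosspolytope and cube in dimension $n\geq 5$) and observe they are all even, forcing $n=2$ when $E$ is odd. Your added remark that the $n$-simplex has $\binom{n+1}{2}$ edges, odd exactly when $n\equiv 1,2\pmod 4$, is a nice justification of why the simplex exclusion is genuinely needed, but the core argument matches the paper's.
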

\begin{proof}
From Table \ref{tab:j-Face-Cardinality-and-Shape}, we see that all the regular
3- and 4-polytopes have even $E$. Likewise, substituting $j=1$ into the
$j$-face cardinality formulas for the $n$-crosspolytopes and $n$-cubes in Table \ref{tab:j-Face-Cardinality-and-Shape} yields $E=2^{2}\binom{n}{2}=2n(n-1)$ and $E=2^{n-1}\binom{n}{1}=2^{n-1}n$, respectively, both of which are even. Since $\mathcal{P}$ is not a simplex of dimension $n \geq 3$ by assumption, this means $\mathcal{P}$ is
not a polytope of dimension $n\geq3$. Therefore, $n=2$.
\end{proof}
\subsection{\label{subsec:Generalizing-Fact-2}Generalizing Fact 2}
\begin{thm}
\label{Thm:sum-of-distinct-chords}Let $\mathcal{P}$ be a regular
$n$-polytope inscribed in a unit $n$-sphere ($n \geq 2$).
Let $\mathcal{P}$ have $V$ vertices, $E$ edges, and $k$ chords of distinct lengths, and let $d_{i}$ denote the $i$\textsuperscript{th} distinct chord length
of $\mathcal{P}$.

\begin{enumerate}
\item If $\mathcal{P}$ is a simplex of dimension $n\geq3$, then $\sum_{i=1}^{k}d_{i}^{2}$
is a non-integral rational number.
\item If $\mathcal{P}$ is \emph{not }a simplex of dimension $n\geq3$,
then $\sum_{i=1}^{k}d_{i}^{2}$ is an integer. Specifically:

\begin{enumerate}
\item If $E$ is odd, this integer is $2k+1$.
\item If $E$ is even, this integer is $2k+2$.
\end{enumerate}
\end{enumerate}
\end{thm}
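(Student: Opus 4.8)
The plan is to handle the two cases separately, using the already-established Theorem \ref{Thm:sum-of-all-chords-Vsquared} together with the combinatorial lemmas in this section. For Part 1, suppose $\mathcal{P}$ is an $n$-simplex with $n\geq 3$. Since a simplex has no diagonals, all chords are edges and there is exactly one distinct chord length, namely the edge length $e$, so $k=1$ and $\sum_{i=1}^{k}d_i^2 = e^2$. From Table \ref{tab:Edge-Length}, $e = 2\bigl(2-\tfrac{2}{n+1}\bigr)^{-1/2}$, so $e^2 = \dfrac{4}{2-\frac{2}{n+1}} = \dfrac{4(n+1)}{2(n+1)-2} = \dfrac{2(n+1)}{n}$. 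This is rational, and it is an integer precisely when $n \mid 2(n+1)$, i.e.\ when $n \mid 2$, i.e.\ $n\in\{1,2\}$; since $n\geq 3$ here, $e^2 = 2(n+1)/n$ is a non-integral rational number, which is exactly the claim.

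For Part 2, assume $\mathcal{P}$ is not a simplex of dimension $n\geq 3$. The idea is to show that from a fixed vertex $\mathbf{P}_0$ the chords pair up by supplementary central angles, so the distinct chord lengths come in pairs $\{d_i, d_i'\}$ with $d_i^2 + (d_i')^2 = (2-2\cos\theta_i)+(2-2\cos(\pi-\theta_i)) = 4$, together with at most one ``self-paired'' length coming from $\theta = \pi/2$ (contributing $d^2 = 2$) and, when $\mathcal{P}$ is centrally symmetric, the diameter of length $2$ (contributing $4$). I would run the same central-symmetry argument as in Case 1 of Theorem \ref{Thm:sum-of-all-chords-Vsquared}: by Lemma \ref{lem:odd-edged-non-simplex-dim1or2}, if $E$ is odd then $n=2$, so that subcase is a genuine regular polygon and Lemma \ref{Lemma-=0000233} gives $E = 2k+1$; then Fact 2(a) states $\sum d_i^2 = E = 2k+1$, which is the desired integer. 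If $E$ is even, split further: either $\mathcal{P}$ is an even-edged polygon (Fact 2(b) gives an integer, and a direct count of the pairing gives the value), or $\mathcal{P}$ has dimension $n\geq 3$ and is therefore centrally symmetric (since the only non-centrally-symmetric polytopes of dimension $\geq 3$ are the simplices, which are excluded). In the centrally symmetric case, apply the supplementary-angle pairing from $\mathbf{P}_0$: the $k$ distinct lengths consist of the diameter $d_k = 2$ (with $d_k^2 = 4$), possibly a right-angle length $d = \sqrt{2}$, and the remaining lengths grouped into supplementary pairs summing to $4$. One then needs a clean count showing the total is exactly $2k+2$.

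The cleanest way to pin down the value $2k+2$ in the even case is to count distinct chord lengths against the angle pairing directly rather than case-splitting on whether $\sqrt 2$ occurs. Among the $k$ distinct lengths emanating from $\mathbf{P}_0$, the longest is the diameter $d_k=2$. Partition the remaining $k-1$ lengths by the involution $d_i \mapsto d_i'$ (the length opposite the supplementary angle $\pi-\theta_i$); this involution is well-defined on these $k-1$ lengths by the same SSS-congruence argument used in Theorem \ref{Thm:sum-of-all-chords-Vsquared}, and it has at most one fixed point, namely $d=\sqrt 2$ (from $\theta_i=\pi/2$). If there is a fixed point, the other $k-2$ lengths form $(k-2)/2$ pairs each contributing $4$, so $\sum_{i=1}^k d_i^2 = 4 + 2 + 4\cdot\frac{k-2}{2} = 2k+2$. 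If there is no fixed point, the $k-1$ lengths form $(k-1)/2$ pairs each contributing $4$, giving $\sum_{i=1}^k d_i^2 = 4 + 4\cdot\frac{k-1}{2} = 2k+2$ as well. Either way the sum is $2k+2$. For the even-edged polygon subcase, the identical pairing argument applies verbatim (a regular $2m$-gon has a diameter from each vertex and supplementary-angle pairing among the rest), so no separate appeal to Fact 2(b) is actually needed. I expect the main obstacle to be the bookkeeping in the ``no fixed point'' versus ``one fixed point'' dichotomy and making fully rigorous the claim that $d=\sqrt 2$ is the \emph{only} possible self-supplementary length and occurs at most once among the distinct lengths from $\mathbf{P}_0$ — this requires care that the SSS argument genuinely produces an involution on the set of distinct lengths (not merely on chords), which is where the ``not a simplex of dimension $n\geq 3$'' hypothesis and central symmetry are both doing real work.
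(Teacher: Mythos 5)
Your proposal is correct and follows essentially the same route as the paper: Part 1 via the explicit edge length $e^{2}=\frac{2(n+1)}{n}$, Part 2a via Lemma \ref{lem:odd-edged-non-simplex-dim1or2} reducing to the polygon case and Lemma \ref{Lemma-=0000233} giving $E=2k+1$, and Part 2b via central symmetry and the supplementary-angle pairing inherited from the proof of Theorem \ref{Thm:sum-of-all-chords-Vsquared}. The only cosmetic difference is that you tally the pairs $d_{i}^{2}+(d_{i}')^{2}=4$ with an explicit fixed-point/no-fixed-point dichotomy, whereas the paper writes $\sum_{i=1}^{k}d_{i}^{2}=2(k-1)-2\sum_{i=1}^{k-1}\cos\theta_{i}+4$ and shows the cosine sum vanishes, which sidesteps the parity bookkeeping you flag as the main obstacle.
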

\begin{proof} We prove Part 1, Part 2a, and then Part 2b.

\emph{Part 1.} Let $\mathcal{P}$ be a regular $n$-simplex inscribed
in a unit $n$-sphere where $n\geq3$. Recall that a simplex has only
one type of chord (i.e., its edges), all of which have length $e=2\left(2-\frac{2}{n+1}\right)^{-1/2}$.
Thus, we have
\[
\sum_{i=1}^{k}d_{i}^{2}=e^{2}=\frac{4}{2-\frac{2}{n+1}}=\frac{2\left(n+1\right)}{n}.
\]
Observe that both the numerator and denominator of $\frac{2\left(n+1\right)}{n}$
are integers (and that $n\neq0$). Thus, $\frac{2\left(n+1\right)}{n}$
is a rational number. Moreover, since $n\geq3$, $n$
does not divide 2. On the other hand, since $n$ and $n+1$ are consecutive
integers, they are relatively prime. Hence, since $n\neq1$, $n$
does not divide $n+1$. We conclude that $\frac{2\left(n+1\right)}{n}$
is not an integer but is a rational number.

\emph{Part 2a.} Let $\mathcal{P}$ be an \emph{odd}-edged regular $n$-polytope (inscribed in a unit $n$-sphere) of dimension $n \geq 2$ that is \emph{not} a simplex of dimension $n\geq3$.  Then, by Lemma
\ref{lem:odd-edged-non-simplex-dim1or2}, $\mathcal{P}$ is 2-dimensional.
Hence, by Fact 2, we have $\sum_{i=1}^{k}d_{i}^{2}=E$.
Since $E$ is odd, $\sum_{i=1}^{k}d_{i}^{2}=E=2k+1$ by Lemma
\ref{Lemma-=0000233}.

\emph{Part 2b.} Let $\mathcal{P}$ be an \emph{even}-edged regular $n$-polytope (inscribed in a unit $n$-sphere) of dimension $n \geq 2$ that is \emph{not} a simplex of dimension $n\geq3$. Thus, $\mathcal{P}$
is neither an odd-edged polygon nor a simplex of dimension $n \geq 2$.
Hence, $\mathcal{P}$ is centrally symmetric.

Choose a
vertex \textbf{$\mathbf{P}_{\mathbf{0}}$} of $\mathcal{P}$ and let
\textbf{$\mathbf{P_{i}}$} for $i=1,2,...,k$ be $k$ other vertices
of the polytope such that $\mathbf{P}_{0}$ and $\mathbf{P_{i}}$
form the endpoints of $k$ different chords each having a distinct
length $d_{i}$ and $d_{1}<d_{2}<\cdots<d_{k}$ (just as in Case 3
of the proof of Theorem \ref{Thm:sum-of-all-chords-Vsquared}). From the proof of Theorem \ref{Thm:sum-of-all-chords-Vsquared},
we know that $d_{i}^{2}=2-2\cos\theta_{i}$ for $i=1,2,...,k-1$
(where $\theta_{i}$ is the measure of the angle $\angle\mathbf{P_{0}OP_{i}}$) and $d_{k}^{2}=2^{2}.$  Therefore, we have
\[
\sum_{i=1}^{k}d_{i}^{2}=\sum_{i=1}^{k-1}\left(2-2\cos\theta_{i}\right)+d_{k}^{2}=2\left(k-1\right)-2\sum_{i=1}^{k-1}\cos\theta_{i}+4.
\]
Next, we show that $\sum_{i=1}^{k-1}\cos\theta_{i}=0$. Recall from
the proof of Theorem \ref{Thm:sum-of-all-chords-Vsquared} that, for
each angle of measure $\theta_{i}$ contained in the set $\left\{ \angle\mathbf{P_{0}OP_{i}}:\:i=1,2,...,k\right\} $,
there is an angle (not necessarily distinct from the angle of measure
$\theta_{i}$) that is supplementary to the angle of measure $\theta_{i}$
and is also contained in the set. Thus, if the angle of measure $\theta_{i}$
and its supplement are distinct angles, it follows that $\cos\theta_{i}+\cos\left(\pi-\theta_{i}\right)=\cos\theta_{i}-\cos\theta_{i}=0$
and these two angles contribute nothing to $\sum_{i=1}^{k-1}\cos\theta_{i}$.
If the angle of measure $\theta_{i}$ and its supplement are actually
the same angle, then $\theta_{i}=\frac{\pi}{2}$ and $\cos\left(\frac{\pi}{2}\right)=0$.
Therefore, we see that $\sum_{i=1}^{k-1}\cos\theta_{i}=0$ and $\sum_{i=1}^{N}d_{i}^{2}=2k+2.$
\end{proof}
\begin{cor}
\label{Cor:dual-to-sum-of-distinct}Let $\mathcal{Q}$ be a regular
$n$-polytope circumscribed about a unit
$n$-sphere ($n \geq 2$) having $E$ edges and $F$ facets.  Let $t_{i}$ for
$i=1,2,...,k$ be the distinct lengths of the line segments whose
endpoints are centers of facets.

\begin{enumerate}
\item If $\mathcal{Q}$ is a simplex of dimension $n\geq3$, then $\sum_{i=1}^{k}t_{i}^{2}$
is a non-integral rational number.
\item If $\mathcal{Q}$ is \emph{not }a simplex of dimension $n\geq3$,
then $\sum_{i=1}^{k}t_{i}^{2}$ is an integer. Specifically:

\begin{enumerate}
\item If $E$ is odd, this integer is $2k+1$.
\item If $E$ is even, this integer is $2k+2$.
\end{enumerate}
\end{enumerate}
\end{cor}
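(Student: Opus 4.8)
The plan is to mimic the proof of Corollary \ref{DualCor:sum-of-all-chords}: reciprocate $\mathcal{Q}$ with respect to the unit $n$-sphere about which it is circumscribed to obtain a regular $n$-polytope $\mathcal{P}$ inscribed in that sphere whose vertices are precisely the facet-centers of $\mathcal{Q}$. Then the line segments joining facet-centers of $\mathcal{Q}$ are exactly the chords of $\mathcal{P}$, so the multiset of their lengths coincides with the multiset of chord lengths of $\mathcal{P}$; in particular, the distinct lengths $t_{1},\ldots,t_{k}$ are the distinct chord lengths $d_{1},\ldots,d_{k}$ of $\mathcal{P}$, with the same value of $k$. It then suffices to apply Theorem \ref{Thm:sum-of-distinct-chords} to $\mathcal{P}$ and to translate the hypotheses on $\mathcal{Q}$ into hypotheses on $\mathcal{P}$.

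For Part 1, I would use that reciprocation sends a simplex to a simplex of the same dimension and a non-simplex to a non-simplex (a finite check against Table \ref{tab:j-Face-Cardinality-and-Shape}: the $n$-simplex is self-reciprocal, the $n$-cube and $n$-crosspolytope reciprocate to each other, etc.). Hence $\mathcal{Q}$ is a simplex of dimension $n\geq 3$ if and only if $\mathcal{P}$ is, and Part 1 of Theorem \ref{Thm:sum-of-distinct-chords} gives that $\sum_{i=1}^{k}t_{i}^{2}=\sum_{i=1}^{k}d_{i}^{2}$ is a non-integral rational number. For Part 2, assume $\mathcal{Q}$ is not a simplex of dimension $n\geq 3$; then neither is $\mathcal{P}$, so Part 2 of Theorem \ref{Thm:sum-of-distinct-chords} applies to $\mathcal{P}$ and yields $\sum_{i=1}^{k}t_{i}^{2}=2k+1$ or $2k+2$ according as the number of edges of $\mathcal{P}$ is odd or even. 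The remaining task is to confirm that this parity agrees with the parity of the edge count $E$ of $\mathcal{Q}$.

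This last point is the only step needing care. If $E$ is odd, then $\mathcal{Q}$ is an odd-edged regular polytope that is not a simplex of dimension $n\geq 3$, so Lemma \ref{lem:odd-edged-non-simplex-dim1or2} forces $n=2$; but the reciprocal of an $E$-gon is again an $E$-gon, so $\mathcal{P}$ has $E$ (odd) edges and Theorem \ref{Thm:sum-of-distinct-chords}(2a) gives $2k+1$. If $E$ is even, then either $n=2$, in which case $\mathcal{P}$ is again an $E$-gon and hence has an even edge count, or $n\geq 3$, in which case $\mathcal{P}$ is a non-simplex of dimension $n\geq 3$ and Lemma \ref{lem:odd-edged-non-simplex-dim1or2} (in contrapositive form) shows its edge count is even; in both subcases Theorem \ref{Thm:sum-of-distinct-chords}(2b) gives $2k+2$. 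I expect the bookkeeping of reciprocation-invariance — of ``being a simplex of dimension $n\geq 3$'' and of edge-count parity — to be the main, though minor, obstacle; everything else is immediate from Theorem \ref{Thm:sum-of-distinct-chords}, exactly as in Corollary \ref{DualCor:sum-of-all-chords}.
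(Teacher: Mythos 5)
Your proposal is correct and follows essentially the same route as the paper: reciprocate $\mathcal{Q}$ to an inscribed polytope $\mathcal{P}$ whose chords are the facet-center segments, apply Theorem \ref{Thm:sum-of-distinct-chords}, and check that the relevant hypotheses (being a simplex of dimension $n\geq3$, and the parity of the edge count) are preserved under reciprocation. The paper's own proof is just a terser version of this, invoking preservation of central symmetry where you instead route the even-$E$ case through Lemma \ref{lem:odd-edged-non-simplex-dim1or2}; your extra bookkeeping is sound.
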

\begin{proof}
This corollary follows by much the same reasoning as Corollary \ref{DualCor:sum-of-all-chords}.
Note that the reciprocal of a regular simplex of dimension
$n\geq3$ is a regular simplex of dimension $n\geq3$, the reciprocal of an odd-edged
regular polygon is an odd-edged regular polygon, and the reciprocal
of a centrally-symmetric regular polytope
is a centrally-symmetric regular polytope (cf.
\cite{Coxeter1973}).
\end{proof}

\subsubsection{Special Cases (the regular 3-polytopes)}
\begin{thm}
\label{Cor:special-case-sum-of-distinct}Let $\mathcal{P}$ be a regular
$3$-polytope inscribed in a unit 3-sphere with $V$ vertices and
$k$ distinct chords. Let $d_{i}$ be the $i$\textsuperscript{th}
distinct chord length of $\mathcal{P}$.

\begin{enumerate}
\item For the self-dual tetrahedron, $\sum_{i=1}^{k}d_{i}^{2}\in\mathbb{Q}$.
\item For the dual pair of octahedron and cube, $\sum_{i=1}^{k}d_{i}^{2}=V$.
\item For the dual pair of icosahedron and dodecahedron, $\sum_{i=1}^{k}d_{i}^{2}=2k+2$.
\end{enumerate}
\end{thm}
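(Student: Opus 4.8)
The plan is to obtain all three parts as consequences of Theorem \ref{Thm:sum-of-distinct-chords}, separating the self-dual tetrahedron from the two centrally-symmetric dual pairs. Part 1 is immediate: the tetrahedron is the regular $3$-simplex, so Part 1 of Theorem \ref{Thm:sum-of-distinct-chords} with $n=3$ gives $\sum_{i=1}^{k}d_i^{2}=\frac{2(n+1)}{n}=\frac{8}{3}$, a non-integral rational number, so in particular $\sum_{i=1}^{k}d_i^{2}\in\mathbb{Q}$.

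For Parts 2 and 3, I would first note that none of the octahedron, cube, icosahedron, dodecahedron is a simplex of dimension $n\geq 3$, and that each has an even number of edges by Table \ref{tab:j-Face-Cardinality-and-Shape} (namely $E=12$ for the octahedron and cube, $E=30$ for the icosahedron and dodecahedron). Hence Part 2b of Theorem \ref{Thm:sum-of-distinct-chords} applies to each, giving $\sum_{i=1}^{k}d_i^{2}=2k+2$. For the icosahedron and dodecahedron this is exactly the claim of Part 3, so nothing more is required there (one may, if desired, record that $k=3$ for the icosahedron and $k=5$ for the dodecahedron, so that the respective sums are $8$ and $12$).

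It then remains, for Part 2, to check that $2k+2$ equals the corresponding $V$, i.e., that the octahedron has $k=2$ distinct chord lengths (with $V=6$) and the cube has $k=3$ (with $V=8$). I would read these off the standard coordinate models: taking the octahedron as the $3$-crosspolytope with the six vertices $\pm\mathbf{e}_1,\pm\mathbf{e}_2,\pm\mathbf{e}_3$, two distinct vertices are either antipodal (distance $2$) or not (distance $\sqrt{2}$), so $k=2$ and $2k+2=6=V$; taking the cube as $\{(\pm1,\pm1,\pm1)\}$, two vertices differing in exactly $m$ coordinates lie at distance $2\sqrt m$ for $m\in\{1,2,3\}$ (edge, face diagonal, space diagonal), so $k=3$ and $2k+2=8=V$. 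Substituting these into $\sum_{i=1}^{k}d_i^{2}=2k+2$ then yields Part 2. As a cross-check one could instead scale the edge lengths $\frac{2}{\sqrt3}$ (cube) and $\sqrt2$ ($3$-crosspolytope) from Table \ref{tab:Edge-Length} by $1,\sqrt2,\sqrt3$ and by $1$, respectively, obtaining $\frac43+\frac83+4=8$ and $2+4=6$. The argument is routine throughout; the only point needing any care is confirming that the listed distances in Part 2 are pairwise distinct, which is clear ($2\ne\sqrt2$; and $2,2\sqrt2,2\sqrt3$ are distinct). Everything else is a direct appeal to Theorem \ref{Thm:sum-of-distinct-chords}.
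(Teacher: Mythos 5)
Your proposal is correct and follows the same overall route as the paper: Part 1 is read off from Part 1 of Theorem \ref{Thm:sum-of-distinct-chords}, Parts 2 and 3 are reduced to the even-edge case $\sum_{i=1}^{k}d_i^{2}=2k+2$ of that theorem, and the only remaining work is to determine $k$ for the octahedron and cube. The one place you diverge is in how $k$ is computed: you use explicit coordinate models ($\pm\mathbf{e}_1,\pm\mathbf{e}_2,\pm\mathbf{e}_3$ and $(\pm1,\pm1,\pm1)$) specific to dimension $3$, whereas the paper argues more generally, showing via the Cartesian-cross construction that \emph{every} regular $n$-crosspolytope has exactly two chord types (edges and inner diagonals) and that an $n$-cube has exactly $n$ chord types (one per $j$-face dimension, $1\leq j\leq n$). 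Your version is shorter and entirely self-contained for the theorem at hand; the paper's version buys reusable facts that it explicitly cites later (e.g., the proof of Theorem \ref{Thm:product-of-all-chords} opens by recalling the two chord types of the $n$-crosspolytope from this very proof), so if you adopted your coordinate argument you would need to relocate those general claims elsewhere. Both arguments are sound, and your numerical cross-checks ($2+4=6$ and $\frac43+\frac83+4=8$) are consistent with Table \ref{tab:Edge-Length}.
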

\begin{proof}Parts 1 and 3 follow directly from Parts
1 and 2c of Theorem \ref{Thm:sum-of-distinct-chords}, respectively
(since the icosahedron and dodecahedron have an even number of edges).
For Part 2, recall that the regular octahedron and
cube have an even number of edges. Hence, for each of them, Part 2c
of Theorem \ref{Thm:sum-of-distinct-chords} guarantees that $\sum_{i=1}^{k}d_{i}^{2}=2k+2$.
We determine $k$ for the octahedron and cube.

\emph{Octahedron.} The octahedron is just the regular 3-crosspolytope,
so consider a regular $n$-crosspolytope of edge length $e$ inscribed
in a unit $n$-sphere (where $n\geq2$). It can be constructed by
(a) creating a Cartesian cross ($n$ mutually orthogonal lines through
a point $\mathbf{O}$; see \cite{Coxeter1973}), (b) finding the points
on each of these lines that are all a distance $e\sqrt{0.5}$ away
from the point $\mathbf{O}$, and (c) connecting all pairs of these points
that do not lie on the same line of the Cartesian cross by line segments
(cf. the construction in \cite{Sequin}).

We now have a regular $n$-crosspolytope whose vertices
are the $2n$ points found in part ``b''
and whose edges are all the line segments constructed in
part ``c.''  (These edges do have length $e$, as can be checked by applying the Pythagorean Theorem
to a triangle whose vertices consist of any one of the pairs of points
mentioned in part ``c'' and the point $\mathbf{O}$).

Now we determine $k$. From part ``c'' of our construction, it is readily apparent that any vertex
$\mathbf{P}$ of the crosspolytope is connected to all other
vertices by edges except for the vertex $\mathbf{P}'$ lying on the same ``Cartesian cross'' line as $\mathbf{P}$. Thus,
the chords of the crosspolytope come in two types: (a) edges and (b)
inner diagonals $\mathbf{PP'}$ that are portions of the ``Cartesian cross'' lines. Thus, $k=2$ and $\sum_{i=1}^{k}d_{i}^{2}=2k+2=6$,
which is the number of vertices.

\emph{Cube.} The cube is the 3-cube, so consider an $n$-cube.
An $n$-cube\textquoteright s $j$-faces are $j$-cubes
(see Table \ref{tab:j-Face-Cardinality-and-Shape}) and its chords
thus come in $n$ types: (a) edges and (b) $n-1$ types of diagonals
(each of which is the \textquotedblleft longest diagonal\textquotedblright{}
of a $j$-face, where $2\leq j\leq n$, of the $n$-cube). Thus,
$k=n$ for a regular $n$-cube and $k=3$ for the 3-cube.
Therefore, $\sum_{i=1}^{k}d_{i}^{2}=2k+2=8$,
which is the number of vertices.
\end{proof}
\begin{cor}
\label{DualCor-to-special-case-of-sum-of-distinct}Let $\mathcal{Q}$
be a regular 3-polytope circumscribed about a unit 3-sphere
with $F$ facets. Let $t_{i}$, for $i=1,2,...,k$, be the distinct
lengths of the line segments whose endpoints are centers of facets
of $\mathcal{Q}$. Then
\begin{enumerate}
\item For the self-dual tetrahedron, $\sum_{i=1}^{k}t_{i}^{2}\in\mathbf{\mathbb{Q}}$.
\item For the dual pair of octahedron and cube, $\sum_{i=1}^{k}t{}_{i}^{2}=F$.
\item For the dual pair of icosahedron and dodecahedron, $\sum_{i=1}^{k}t_{i}^{2}=2k+2$.
\end{enumerate}
\end{cor}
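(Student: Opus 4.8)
The plan is to mimic the proof of Corollary \ref{DualCor:sum-of-all-chords} (and Corollary \ref{Cor:dual-to-sum-of-distinct}), transferring the statement from $\mathcal{Q}$ to its reciprocal. First I would invoke the fact recorded in the Preliminaries that the centers of the facets of $\mathcal{Q}$ are precisely the vertices of a regular $3$-polytope $\mathcal{P}$ reciprocal to $\mathcal{Q}$ with respect to the unit $3$-sphere about which $\mathcal{Q}$ is circumscribed. Consequently each line segment joining two facet-centers of $\mathcal{Q}$ is a chord of $\mathcal{P}$ and conversely, so the multiset of lengths $\{t_i\}$ coincides with the multiset of chord lengths of $\mathcal{P}$; in particular the number $k$ of distinct such lengths is the same for $\mathcal{Q}$ as for $\mathcal{P}$, and $\sum_{i=1}^{k} t_i^{2} = \sum_{i=1}^{k} d_i^{2}$, where $d_i$ denotes the $i$\textsuperscript{th} distinct chord length of $\mathcal{P}$.

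Next I would apply Theorem \ref{Cor:special-case-sum-of-distinct} to $\mathcal{P}$, splitting into the three dual pairs. If $\mathcal{Q}$ is the (self-dual) tetrahedron, then $\mathcal{P}$ is again a tetrahedron, and Part 1 of Theorem \ref{Cor:special-case-sum-of-distinct} gives $\sum_{i=1}^{k} t_i^{2} = \sum_{i=1}^{k} d_i^{2} \in \mathbb{Q}$. If $\mathcal{Q}$ is the octahedron or the cube, then $\mathcal{P}$ is the cube or the octahedron respectively, and Part 2 gives $\sum_{i=1}^{k} t_i^{2} = \sum_{i=1}^{k} d_i^{2} = V$, where $V$ is the number of vertices of $\mathcal{P}$; since reciprocation carries the facets of $\mathcal{Q}$ bijectively to the vertices of $\mathcal{P}$, we have $V = F$, whence $\sum_{i=1}^{k} t_i^{2} = F$. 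If $\mathcal{Q}$ is the icosahedron or the dodecahedron, then $\mathcal{P}$ is the dodecahedron or the icosahedron respectively, and Part 3 gives $\sum_{i=1}^{k} t_i^{2} = \sum_{i=1}^{k} d_i^{2} = 2k+2$, with the same $k$ as above.

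The argument is essentially bookkeeping, so I do not expect a substantial obstacle; the only point needing a word of care is the identification of the reciprocal of each regular $3$-polytope with its claimed partner (tetrahedron $\leftrightarrow$ tetrahedron, octahedron $\leftrightarrow$ cube, icosahedron $\leftrightarrow$ dodecahedron), together with the equality $F(\mathcal{Q}) = V(\mathcal{P})$ coming from the facet-vertex bijection induced by reciprocation. Both of these are standard facts about the regular polyhedra (cf. \cite{Coxeter1973}) and are already used in the proof of Corollary \ref{DualCor:sum-of-all-chords}, so I would simply cite them.
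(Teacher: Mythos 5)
Your proposal is correct and is exactly the reciprocation argument the paper intends: the paper in fact states Corollary \ref{DualCor-to-special-case-of-sum-of-distinct} without writing out a proof, relying on the same reasoning as Corollaries \ref{DualCor:sum-of-all-chords} and \ref{Cor:dual-to-sum-of-distinct}, which you have reproduced faithfully (including the key identifications tetrahedron $\leftrightarrow$ tetrahedron, octahedron $\leftrightarrow$ cube, icosahedron $\leftrightarrow$ dodecahedron and $F(\mathcal{Q})=V(\mathcal{P})$). No gaps.
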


\subsection{Discussion}

Theorem \ref{Thm:sum-of-distinct-chords} (combined with Lemma \ref{lem:odd-edged-non-simplex-dim1or2}) shows that Fact 2 does,
in fact, apply to \emph{all} regular $n$-polytopes (where $n \geq 2$) \emph{except}
most $n$-simplices. Moreover, this theorem shows that the ``$E$''
in Fact 2 can be re-characterized as $2k+1$ (where $k$ is the number
of distinct chords) and specifies the ``some integer'' in Fact 2
as $2k+2$. Thus, these two integers are remarkably similar.

\section{FACT 3: Product of Squared Chords}

The third 2-polytope fact stated that, for any regular 2-polytope
inscribed in a unit 2-sphere, the product of its squared chord lengths
equals $E^{E}.$ We test generalization of this fact to (a) the regular
$n$-crosspolytopes (where $n \geq 2$), (b) the regular 24-cell, and (c) the other regular
polytopes.

\subsection{Preliminaries}

No additional information is need to test generalization of Fact 3
to the $n$-crosspolytopes. However, in testing generalization to
the 24-cell, we will make use of the following definition, and, in testing generalization to the other regular
polytopes, we will make use the cardinalities in Table \ref{tab:number-of-vertices-incident-to-edges-vs}.
\begin{defn}
A \emph{section} of a regular $n$-polytope $\mathcal{P}$ is a
nonempty intersection of $\mathcal{P}$ and an $(n-1)$-plane
$\mathcal{H}$ (cf. \cite{Coxeter1973}). This intersection is a $j$-polytope
for some $j \in \left \{0, 1, ...,  n-1 \right \}$ (cf. \cite{Coxeter1973}).
\end{defn}
\noindent Of particular interest to us are the sections known as \emph{simplified
sections}. These are the sections (a) that are formed when the $(n-1)$-plane
$\mathcal{H}$ is orthogonal to a line $l$ that passes through the
$n$-polytope's center $\mathbf{O}$ and a given, fixed vertex $\mathbf{P_{0}}$
and (b) that form $j$-polytopes whose vertices are all vertices of
the original $n$-polytope $\mathcal{P}$. For these sections, which
we will denote by $\mathcal{S}_{i}$ for $i=0,1,2,...,k$, the distances
between the fixed vertex $\mathbf{P_{0}}$ and each one of the vertices
in a given section ($\mathcal{S}_{i}$ for some particular $i$) are
always the same length (for a regular polytope; cf. \cite{Coxeter1973}).
Hence, if we let $\mathbf{P_{i}}$ be one of the vertices in the section
$\mathcal{S}_{i}$ of a given regular polytope, the lengths of the
chords $\mathbf{P_{0}P_{i}}$ and $\mathbf{P_{0}P_{j}}$ are distinct
if $i\neq j$.

\begin{table}
\caption{\label{tab:number-of-vertices-incident-to-edges-vs}Number of ``Vertices
Incident to Any Edge'' (\textbf{$\nu$}) and Number of ``Edges Incident
to Any Vertex'' (\textbf{$\varepsilon$}) for the Regular 3-Polytopes \cite{Coxeter1973}.}

\medskip{}

\centering{}%
\begin{tabular}{ccc}
\toprule 
\textbf{3-Polytope} & \multicolumn{1}{c}{\textbf{$\nu$}} & \multicolumn{1}{c}{\textbf{$\varepsilon$}}\tabularnewline
\midrule
\midrule 
tetrahedron & 2 & 3\tabularnewline
\midrule 
octahedron & 2 & 4\tabularnewline
\midrule 
cube & 2 & 3\tabularnewline
\midrule 
icosahedron & 2 & 5\tabularnewline
\midrule 
dodecahedron & 2 & 3\tabularnewline
\bottomrule
\end{tabular}
\end{table}
\subsection{\label{subsec:Generalizing-Fact-3}Generalizing Fact 3 to Crosspolytopes}
\begin{thm}
\label{Thm:product-of-all-chords}Let $\mathcal{P}$ be a regular
$n$-crosspolytope with $V$ vertices and $F$ facets inscribed in
a unit $n$-sphere (where $n \geq 2$). Then 
\[
\prod_{i=1}^{N}c_{i}^{2}=F^{V}
\]
where $N$ is the total number of chords of $\mathcal{P}$ and $c_{i}$
is the length of each $i$\textsuperscript{th} chord.
\end{thm}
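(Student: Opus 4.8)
The plan is to compute $\prod_{i=1}^{N}c_{i}^{2}$ directly, exploiting the very simple chord structure of a crosspolytope, and then to check that the answer coincides with $F^{V}$. First I would record the relevant face counts from Table \ref{tab:j-Face-Cardinality-and-Shape}: substituting $j=0$ into the $n$-crosspolytope's cardinality formula $2^{j+1}\binom{n}{j+1}$ gives $V=2n$, and substituting $j=n-1$ gives $F=2^{n}$.

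Next I would recall, from the explicit coordinate construction of the $n$-crosspolytope already given in the ``Octahedron'' paragraph of the proof of Theorem \ref{Cor:special-case-sum-of-distinct}, that every vertex $\mathbf{P}$ is joined by an edge to every other vertex except the unique antipodal vertex $\mathbf{P}'$ lying on the same Cartesian-cross line. Hence the chords of $\mathcal{P}$ fall into exactly two classes: the $E$ edges, and the inner diagonals $\mathbf{P}\mathbf{P}'$, each of which is a diameter of the unit $n$-sphere. Counting via the cardinality formula, $E=2^{2}\binom{n}{2}=2n(n-1)$, while the number of diameters is the number of antipodal pairs among the $2n$ vertices, i.e.\ $\tfrac{V}{2}=n$. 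As a sanity check, $E+n=2n^{2}-n=\binom{2n}{2}=\binom{V}{2}=N$, in agreement with Lemma \ref{lem:N=00003DV(V-1)/2}.

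Then I would substitute the squared lengths. By Table \ref{tab:Edge-Length}, an $n$-crosspolytope inscribed in the unit $n$-sphere has edge length $e=\sqrt{2}$, so each edge contributes $e^{2}=2$, and each diameter has length $2$ and hence contributes $4$. Therefore
\[
\prod_{i=1}^{N}c_{i}^{2}=2^{E}\cdot 4^{\,n}=2^{2n(n-1)}\cdot 2^{2n}=2^{2n^{2}},
\]
whereas $F^{V}=(2^{n})^{2n}=2^{2n^{2}}$, which completes the argument. There is no genuine obstacle here: the only step needing a word of justification is the claim that the non-edge chords are precisely the $n$ diameters, and that is immediate from the construction already invoked; the remainder is bookkeeping with binomial coefficients and a one-line exponent computation.
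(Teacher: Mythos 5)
Your proposal is correct and follows essentially the same route as the paper: decompose the chords into the $2n(n-1)$ edges of length $\sqrt{2}$ and the $n$ inner diagonals of length $2$, then compute $2^{2n(n-1)}\cdot 2^{2n}=(2^{n})^{2n}=F^{V}$. The only cosmetic difference is that you count the diagonals directly as $V/2$ antipodal pairs, whereas the paper obtains $n$ as $N-E$ via Lemma \ref{lem:N=00003DV(V-1)/2}; the two counts agree, as your own sanity check confirms.
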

\begin{proof}
Recall from the proof of Theorem \ref{Cor:special-case-sum-of-distinct} that the chords of
an $n$-crosspolytope ($n\geq2$) come in two types: (a) edges
and (b) inner diagonals. Let the lengths of the latter be denoted by $d$.

An edge is a 1-face, so the number of edges $E$, given by Table \ref{tab:j-Face-Cardinality-and-Shape},
is
\[
E=2^{j+1}\binom{n}{j+1}=2^{2}\binom{n}{2}=2n(n-1).
\]
The number of inner
diagonals, $N_{d}$, is the total number of chords (i.e., $N$) minus
the number of edges. By Lemma \ref{lem:N=00003DV(V-1)/2}, we have:
\[
N_{d}=\frac{V\left(V-1\right)}{2}-E.
\]
Recall from the proof of Theorem \ref{Cor:special-case-sum-of-distinct}
that an $n$-crosspolytope has $2n$ vertices ($n \geq 2)$. Thus, substituting
the values for $V$ and $E$ yields
\[
N_{d}=\frac{(2n)(2n-1)}{2}-2n(n-1)=n.
\]
Now that we know the number of chords of length $e$ and length $d$,
we want to calculate these lengths. From Table
\ref{tab:Edge-Length}, we see $e=2^{1/2}$.
From the
proof of Theorem \ref{Cor:special-case-sum-of-distinct}, we see $d$ is twice the distance from the polytope's center $\mathbf{O}$ to any one of its vertices and that this
distance is $e\sqrt{0.5}$. Thus, $d=2e\sqrt{0.5}=2$.

Therefore, we have:
\[
\prod_{i=1}^{N}c_{i}^{2}=(e^{2})^{E}(d^{2})^{N_{d}}=2{}^{2n(n-1)}(2)^{2n}=\left(2^{n}\right)^{2n}=(2^{n})^{V}.
\]
From Table \ref{tab:j-Face-Cardinality-and-Shape}, the number of
facets is $F=2^{n}\binom{n}{n}=2^{n}$. We conclude that $\prod_{i=1}^{N}c_{i}^{2}=(2^{n})^{V}$=$F^{V}$.
\end{proof}
\begin{cor}
\label{DualCor:product-of-all}Let $\mathcal{Q}$ be an $n$-cube
with $V$ vertices and $F$ facets that circumscribes a unit
$n$-sphere ($n\geq2$). Then $\prod_{i=1}^{N}s_{i}^{2}=V^{F}$
where $s_{i}$ is the length of each $i$\textsuperscript{th} line
segment whose endpoints are centers of facets of $\mathcal{Q}$ and
$N$ is the total number of these line segments.
\end{cor}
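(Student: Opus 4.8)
The plan is to mimic the reciprocation argument used in the proof of Corollary \ref{DualCor:sum-of-all-chords}, now transferring the multiplicative identity of Theorem \ref{Thm:product-of-all-chords} through duality instead of the additive identity of Theorem \ref{Thm:sum-of-all-chords-Vsquared}. First I would invoke the reciprocity property of regular polytopes recorded in the Preliminaries: since $\mathcal{Q}$ is a regular $n$-cube circumscribed about a unit $n$-sphere, reciprocating $\mathcal{Q}$ with respect to that sphere produces a regular $n$-polytope $\mathcal{P}$ inscribed in the same unit $n$-sphere, and this reciprocal is an $n$-crosspolytope (the dual of the $n$-cube). Moreover, this reciprocation sends the $(j-1)$-faces of $\mathcal{Q}$ to the $(n-j)$-faces of $\mathcal{P}$; in particular it sends the facets (the $(n-1)$-faces) of $\mathcal{Q}$ to the vertices ($0$-faces) of $\mathcal{P}$, and the vertices of $\mathcal{Q}$ to the facets of $\mathcal{P}$.

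Next I would observe that each facet center of $\mathcal{Q}$ is a vertex of $\mathcal{P}$ and conversely, so that the line segments joining pairs of facet centers of $\mathcal{Q}$ are precisely the chords of $\mathcal{P}$. Hence the multiset $\{s_i\}_{i=1}^{N}$ coincides with the multiset of chord lengths $\{c_i\}_{i=1}^{N}$ of $\mathcal{P}$, and $N$ is likewise the total number of chords of $\mathcal{P}$. Applying Theorem \ref{Thm:product-of-all-chords} to the $n$-crosspolytope $\mathcal{P}$ then gives
\[
\prod_{i=1}^{N}s_{i}^{2}=\prod_{i=1}^{N}c_{i}^{2}=F_{\mathcal{P}}^{\,V_{\mathcal{P}}},
\]
where $V_{\mathcal{P}}$ and $F_{\mathcal{P}}$ denote the numbers of vertices and facets of $\mathcal{P}$, respectively.

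Finally I would translate the base and exponent back to $\mathcal{Q}$ using the face correspondence established in the first step: the vertices of $\mathcal{P}$ are in bijection with the facets of $\mathcal{Q}$, so $V_{\mathcal{P}}=F$, and the facets of $\mathcal{P}$ are in bijection with the vertices of $\mathcal{Q}$, so $F_{\mathcal{P}}=V$. Substituting these equalities yields $\prod_{i=1}^{N}s_{i}^{2}=V^{F}$, which is the claimed identity.

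The only step requiring genuine care — and hence the main obstacle — is bookkeeping the duality dictionary correctly: one must be certain that it is the facets of $\mathcal{Q}$ (and not its vertices) that correspond to the vertices of $\mathcal{P}$, so that the answer comes out as $V^{F}$ rather than $F^{V}$. A convenient sanity check is that an $n$-cube has $V=2^{n}$ vertices and $F=2n$ facets, while its reciprocal $n$-crosspolytope $\mathcal{P}$ has $2n$ vertices and $2^{n}$ facets; thus $V_{\mathcal{P}}=2n=F$ and $F_{\mathcal{P}}=2^{n}=V$, confirming that $F_{\mathcal{P}}^{\,V_{\mathcal{P}}}=V^{F}$. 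Everything else is an immediate transfer of Theorem \ref{Thm:product-of-all-chords} through reciprocation, exactly parallel to the proofs of Corollaries \ref{DualCor:sum-of-all-chords} and \ref{DualCor-to-special-case-of-sum-of-distinct}.
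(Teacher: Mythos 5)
Your proposal is correct and is exactly the argument the paper intends: the paper's own proof is the one-line observation that the reciprocal of a regular $n$-crosspolytope is an $n$-cube, leaving the reader to transfer Theorem \ref{Thm:product-of-all-chords} through the same reciprocation machinery used for Corollary \ref{DualCor:sum-of-all-chords}, which is precisely what you have written out (with the correct bookkeeping $V_{\mathcal{P}}=F$, $F_{\mathcal{P}}=V$).
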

\begin{proof}
The reciprocal of a regular $n$-crosspolytope is an $n$-cube
\cite{Coxeter1973}.
\end{proof}

\subsection{Generalizing Fact 3 to the 24-cell}
\begin{thm}
\label{Thm:24-cell-product-of-all-chords}Let $\mathcal{P}$ be a
regular 24-cell with $E$ edges and $R$ ridges inscribed in a unit
4-sphere. Then 
\[
\prod_{i=1}^{N}c_{i}^{2}=6^{E}=6^{R}
\]
where $c_{i}$ is the length of each $i$\textsuperscript{th} chord
of $\mathcal{P}$ and $N$ is the total number of chords.
\end{thm}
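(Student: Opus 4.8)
The plan is to compute $\prod_{i=1}^{N}c_i^2$ directly, by finding the distinct squared chord lengths of the 24-cell inscribed in the unit $4$-sphere together with their multiplicities, and then recognizing the resulting product as $6^{96}$. Observe first that, by Table \ref{tab:j-Face-Cardinality-and-Shape}, the 24-cell has $96$ edges and $96$ two-faces, and, since $n=4$, its ridges \emph{are} its $2$-faces; hence $E=R=96$, and it suffices to prove $\prod_{i=1}^{N}c_i^2=6^{96}$.

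First I would pin down the chord lengths. Using the standard coordinate model of the 24-cell — the $24$ permutations of $(\pm1,\pm1,0,0)$, rescaled by $1/\sqrt2$ so as to lie on the unit $4$-sphere — one checks that the inner product of a fixed vertex $\mathbf{P_0}$ with the other $23$ vertices takes exactly the values $1/2,\,0,\,-1/2,\,-1$. Equivalently, applying the Law of Cosines to the triangles $\triangle\mathbf{P_0OP_i}$ exactly as in the proof of Theorem \ref{Thm:sum-of-all-chords-Vsquared}, the central angles are $\theta_1=\pi/3$, $\theta_2=\pi/2$, $\theta_3=2\pi/3$, $\theta_4=\pi$, so the four distinct squared chord lengths are
\[
d_1^2=1,\qquad d_2^2=2,\qquad d_3^2=3,\qquad d_4^2=4,
\]
where $d_1^2=1$ agrees with the 24-cell's edge length $e=1$ from Table \ref{tab:Edge-Length} and $d_4^2=4$ is the diameter (the 24-cell is centrally symmetric).

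Next I would count, for the fixed vertex $\mathbf{P_0}$, the number $m_i$ of chords of length $d_i$ emanating from it. From the coordinate model (or, equivalently, from Coxeter's description of the vertex-first sections of the 24-cell as a point, a cube, an octahedron, a cube, and a point \cite{Coxeter1973}) these are $m_1=8$, $m_2=6$, $m_3=8$, $m_4=1$, which sum to $23=V-1$ as a check; here $m_1=m_3$ is also forced by the supplement argument in the proof of Theorem \ref{Thm:sum-of-all-chords-Vsquared} (since $\theta_3=\pi-\theta_1$) and $m_4=1$ by central symmetry. Lemma \ref{Lemma:mV/2} with $V=24$ then yields the global multiplicities $N_1=96$, $N_2=72$, $N_3=96$, $N_4=12$, whose sum $276=\binom{24}{2}$ is consistent with Lemma \ref{lem:N=00003DV(V-1)/2}. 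Therefore
\[
\prod_{i=1}^{N}c_i^2=1^{96}\cdot 2^{72}\cdot 3^{96}\cdot 4^{12}=2^{72}\cdot 2^{24}\cdot 3^{96}=2^{96}\,3^{96}=6^{96}=6^{E}=6^{R}.
\]
(Equivalently: the squared chords from one vertex already multiply to $1^{8}\,2^{6}\,3^{8}\,4=6^{8}$, and raising to the power $V/2=12$ gives $6^{96}$.)

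The step I expect to be the main obstacle is the honest determination of the four distinct chord lengths and, above all, of their multiplicities $m_i$; once those are in hand, the rest is bookkeeping via Lemmas \ref{Lemma:mV/2} and \ref{lem:N=00003DV(V-1)/2} together with the collapse $2^{72}\cdot 3^{96}\cdot 4^{12}=6^{96}$. If one prefers to avoid an explicit coordinate computation, the simplified-section structure of the 24-cell quoted from \cite{Coxeter1973} supplies the $m_i$ directly (as the vertex-counts of the successive sections) and, through the radii of the $n$-spheres carrying those sections, the values $d_i^2$.
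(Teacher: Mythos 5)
Your proposal is correct and follows essentially the same route as the paper's proof: the same four distinct squared chord lengths $1,2,3,4$ with per-vertex multiplicities $8,6,8,1$, promoted to total counts $96,72,96,12$ via Lemma \ref{Lemma:mV/2} with $V=24$, and multiplied to give $1^{96}2^{72}3^{96}4^{12}=6^{96}=6^{E}=6^{R}$. The only difference is that you verify the chord lengths and multiplicities from the explicit coordinate model of the 24-cell rather than quoting Coxeter's Table V(i) of simplified sections (which is what the paper does), making the data slightly more self-contained without changing the structure of the argument.
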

\begin{proof} According to Table \ref{tab:Edge-Length}, the edge length of $\mathcal{P}$  is 1. To find its other distinct chord lengths,
we consider the distance $d_{i}'$ from a given, fixed vertex $\mathbf{P_{0}}$
of an arbitrary 24-cell to some vertex $\mathbf{P_{i}}$ in its simplified
section $\mathcal{S}_{i}$ (where $i=1,2,...,k$). (Note that\textemdash
since these $k$ distances are distinct\textemdash they will actually
be the polytope's distinct chord lengths).

In Table V(i) in \cite{Coxeter1973}, there is a list of the values
of $a_{i}$, which is the distance $d_{i}'$ divided by the edge length
$e$ of an arbitrary 24-cell (so $d_{i}'=a_{i}e$).\footnote{Instead of ``$a_{i}$,'' Coxeter \cite{Coxeter1973} actually uses
``$a$.''} Thus, to obtain the distinct chord lengths $d_{i}$ for $\mathcal{P}$, we multiply each of the values of
$a_{i}$ by the edge length of $\mathcal{P}$
(i.e., by 1). These values are listed in the second column of Table
\ref{tab:24-cell-simplified-sects-1}. (For convenience' sake,
the values of $d_{i}^{2}$ are listed in the third column of Table
\ref{tab:24-cell-simplified-sects-1}).

To find the total number of chords of each distinct length, we will
use the total number of vertices $V_i$ in each simplified section $\mathcal{S}_{i}$
as listed in Table V(i) of \cite{Coxeter1973}. Obviously, the number
of chords $m_{i}$ of a given length $d_{i}$ emanating from $\mathbf{P_{0}}$
is the same as $V_i$
(see the fourth column of Table \ref{tab:24-cell-simplified-sects-1}).
Thus, the total number of chords of length $d_{i}$ is $\frac{m_{i}V}{2}$
by Lemma \ref{Lemma:mV/2}. The total number of vertices $V$ of a 24-cell
is $24$ (see Table \ref{tab:j-Face-Cardinality-and-Shape}). Substituting
the values of $m_{i}$ and $V$ into $\frac{m_{i}V}{2}$ yields the
total number of chords of length $d_{i}$ for $i=1,2,...,k$ (as listed
in the last column of Table \ref{tab:24-cell-simplified-sects-1}).

\begin{longtable}{ccccc}
\caption{\label{tab:24-cell-simplified-sects-1}Length and Number of Distinct
Chords in Regular 24-Cell Inscribed in Unit 4-Sphere (second and fourth columns due to \cite{Coxeter1973}).}
\tabularnewline
\hline
\midrule 
\textbf{ $\mathbf{\mathcal{S}}_{\mathbf{i}}$} & $a_{i}$ ($=d_{i}'=d_{i}$) & $d_{i}^{2}$ & \# of vertices ($m_{i})$ & \# of chords of length $d_{i}$\tabularnewline
\midrule
\endfirsthead
\textbf{$\mathbf{\mathcal{S}_{1}}$} & $1$ & $1$ & 8 & 96\tabularnewline
\midrule 
\textbf{$\mathbf{\mathcal{S}_{2}}$} & $2^{1/2}$ & $2$ & 6 & 72\tabularnewline
\midrule 
\textbf{$\mathbf{\mathcal{S}_{3}}$} & $3^{1/2}$ & $3$ & 8 & 96\tabularnewline
\midrule 
\textbf{$\mathbf{\mathcal{S}_{4}}$} & $2$ & $4$ & 1 & 12\tabularnewline
\bottomrule
\end{longtable}
\noindent Therefore, we have $\prod_{i=1}^{k}c_{i}^{2}=\left(1\right)^{96}\left(2\right)^{72}\left(3\right)^{96}\left(4\right)^{12}=6^{96}$.
Table \ref{tab:j-Face-Cardinality-and-Shape} reveals that, for the
24-cell, the number of edges $E$ and the number of ridges $R$ are
both equal to 96. We conclude: $\prod_{i=1}^{k}c_{i}^{2}=6^{E}=6^{R}$
. \end{proof}

\subsection{Generalizing Fact 3 to Other Regular Polytopes}
\begin{thm}
\label{Thm:3D-product-of-all}Let $\mathcal{P}$ be a regular 3-polytope
with $E$ edges and $V$ vertices inscribed in a unit 3-sphere. Then
\[
\prod_{i=1}^{N}c_{i}^{2}=\frac{\nu^{a}}{\varepsilon^{b}}
\]
where $c_{i}$ is the length of each $i$\textsuperscript{th} chord
of $\mathcal{P}$, $\nu$ is the number of vertices incident to any
edge, $\varepsilon$ is the number of edges incident to any vertex,
and $a,\,b$ are positive integers such that $E$ divides $b$ and

\begin{enumerate}
\item for the self-dual tetrahedron,
\[
a\equiv0\:(\textrm{mod }E)\textrm{ and }a\equiv E\:(\textrm{mod }V)
\]
\item for the dual pair of octahedron and cube,
\[
a\equiv V\:(\textrm{mod }E)\textrm{ and }a\equiv E\:(\textrm{mod }V)\textrm{, and}
\]
\item for the dual pair of icosahedron and dodecahedron, 
\[
a\equiv V\:(\textrm{mod }E)\textrm{ and }a\equiv0\:(\textrm{mod }V).
\]
\end{enumerate}
In addition, the number $\varepsilon$ in the equation above may be
replaced by
\[
\frac{E}{\left(E,V\right)}=\frac{\left[E,V\right]}{V}
\]
where $\left(E,V\right)$ is the greatest common divisor of $E$ and
$V$ and $\left[E,V\right]$ is the least common multiple of $E$
and $V$.
\end{thm}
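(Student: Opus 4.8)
The plan is a finite case analysis over the five regular $3$-polytopes. For each one I will (i) compute $\prod_{i=1}^{N}c_i^2$ in closed form and present it as a ratio of prime powers, (ii) recognize the base $2$ of the numerator as $\nu$ (every entry in the $\nu$-column of Table~\ref{tab:number-of-vertices-incident-to-edges-vs} equals $2$) and the base of the denominator as $\varepsilon$ — or, for the octahedron, as a power of $\nu$ — and (iii) read off admissible exponents $a,b$ and verify $E\mid b$ together with the stated congruences.

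The five products are obtained as follows. For the \emph{tetrahedron} (the $3$-simplex) every chord is an edge, and Table~\ref{tab:Edge-Length} gives $e^2=\tfrac{2(n+1)}{n}=\tfrac83$ at $n=3$, so with $E=6$ one gets $\prod c_i^2=(e^2)^6=2^{18}/3^6$. For the \emph{octahedron} (the $3$-crosspolytope) Theorem~\ref{Thm:product-of-all-chords} gives $\prod c_i^2=F^V=8^6=2^{18}$. For the \emph{cube} (the $3$-cube) the chords fall into the three types described in Theorem~\ref{Cor:special-case-sum-of-distinct} — edges, square-face diagonals, space diagonals — of squared lengths $e^2=\tfrac43$, $2e^2=\tfrac83$, $3e^2=4$ (Table~\ref{tab:Edge-Length}) occurring $12$, $12$, $4$ times respectively (the last count from Lemma~\ref{lem:N=00003DV(V-1)/2}), so $\prod c_i^2=(\tfrac43)^{12}(\tfrac83)^{12}4^4=2^{68}/3^{24}$. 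For the \emph{icosahedron} and \emph{dodecahedron} I will read the distinct squared chord lengths and their section sizes ($5,5,1$ and $3,6,6,3,1$) off Coxeter's simplified-section tables, just as was done for the 24-cell, assemble the chord multiplicities via Lemma~\ref{Lemma:mV/2}, and use the identity $\tau^2=\tau+1$ (equivalently $2-\tau=\tau^{-2}$) to collapse the conjugate irrational factors; this gives $\prod c_i^2=2^{132}/5^{30}$ for the icosahedron and $2^{440}/3^{180}$ for the dodecahedron.

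The remainder is bookkeeping. One may take $(a,b)$ equal to $(18,6)$, $(42,12)$, $(68,24)$, $(132,30)$, $(440,180)$ for the tetrahedron, octahedron, cube, icosahedron, dodecahedron; for the octahedron we use $\varepsilon=4=\nu^2$ to write $2^{18}=\nu^{42}/\varepsilon^{12}$, which makes $b$ positive. In every case $E\mid b$ is immediate, and the two congruences are checked by inspection — for example, for the cube $68\equiv 8\equiv V\pmod{12}$ and $68\equiv 4\equiv E\pmod 8$, and for the dodecahedron $440\equiv 20\equiv V\pmod{30}$ and $440\equiv 0\pmod{20}$. For the closing assertion, the identity $(E,V)\,[E,V]=EV$ yields $\tfrac{E}{(E,V)}=\tfrac{[E,V]}{V}$, and this number equals $\varepsilon$ for the tetrahedron, cube, icosahedron and dodecahedron, and equals $2$ for the octahedron; in the octahedron case, replacing $\varepsilon=4$ by $2$ just changes the representation $\nu^{42}/\varepsilon^{12}$ to $\nu^{30}/2^{12}$, which still satisfies $E\mid 12$ and the octahedron congruences.

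The main obstacle is the icosahedron/dodecahedron computation: the individual squared chord lengths are irrational (involving $\sqrt5$), so one must check that the product is not merely rational but has exactly the claimed prime factors. The mechanism is the central-symmetry pairing already exploited in the proof of Theorem~\ref{Thm:sum-of-all-chords-Vsquared}: the chords group into pairs subtending supplementary central angles with equal multiplicities $N_i=N_{i'}$, and each such pair contributes $(d_i^2 d_{i'}^2)^{N_i}$, which collapses via $\tau^2=\tau+1$ to a power of a rational number whose numerator is a power of $2$ and whose denominator is a power of $\varepsilon$.
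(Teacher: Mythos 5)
Your proposal is correct and follows essentially the same route as the paper: a finite case check over the five regular $3$-polytopes, arriving at the same products $2^{18}/3^{6}$, $2^{18}$, $2^{68}/3^{24}$, $2^{132}/5^{30}$, $2^{440}/3^{180}$, the same exponent bookkeeping (your octahedron choice $a=42$, $b=12$ is the paper's $q=1$), and the same $EV=(E,V)[E,V]$ observation at the end. The only divergence is cosmetic: for the icosahedron and dodecahedron the paper grinds out all chord lengths from Coxeter's explicit coordinates, whereas you source the same data from his simplified-section tables and add the (correct, and rather nicer) observation that the supplementary-angle pairing from the proof of Theorem \ref{Thm:sum-of-all-chords-Vsquared} forces the conjugate irrational squared lengths to occur with equal multiplicity and collapse to rational factors.
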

\begin{proof}We consider each of these five cases in turn: (a) the
tetrahedron, (b) the octahedron, (c) the cube, (d) the icosahedron,
and (e) the dodecahedron.

\emph{Tetrahedron.} Recall from the proof of Theorem \ref{Thm:sum-of-all-chords-Vsquared} that a regular $n$-simplex
inscribed in a unit $n$-sphere has $\frac{\left(n+1\right)n}{2}$
edges of length $2\left(2-\frac{2}{n+1}\right)^{-1/2}$ and no diagonals.
Substituting $n=3$ into these expressions shows that, for a regular
3-simplex (i.e., the tetrahedron) inscribed in a unit 3-sphere, $E=6$ and the edge length $e$ is $\frac{2\sqrt{6}}{3}$.
Therefore, we have
\begin{equation}
\prod_{i=1}^{N}c_{i}^{2}=e^{E}=\left(\left(\frac{2\sqrt{6}}{3}\right)^{2}\right)^{6}=\frac{2^{18}}{3^{6}}.\label{eq:3d-product-of-all-tetra}
\end{equation}
Observe that we have $2$ as the numerator\textquoteright s base and
$3$ as the denominator\textquoteright s base (which are $\nu$ and
$\varepsilon$ for the tetrahedron, respectively), as desired. Next,
we consider the exponents. Recall that, for a tetrahedron, $E=6$
and $V=4$. For the denominator\textquoteright s exponent, observe
that $E$ divides 6. For the numerator\textquoteright s exponent,
observe that (a) $18\equiv6\equiv E\:(\mathrm{mod}\,E)$ and (b) $18\equiv2\equiv6\equiv E\:(\mathrm{mod\,}V)$
(as desired). Finally, the identity $EV=(E,V)\cdot[E,V]$ implies
$\frac{E}{\left(E,V\right)}=\frac{\left[E,V\right]}{V}$. Substituting
into the left-hand side of this equation yields $\frac{6}{\left(6,4\right)}=3$,
which is the denominator's base (as desired).

\emph{Octahedron.} Recall from the proof of Theorem \ref{Thm:product-of-all-chords}
that the chords of a regular $n$-crosspolytope inscribed in a unit
$n$-sphere consist of $2n(n-1)$ edges of length $2^{1/2}$ and $n$
inner diagonals of length $2$. Substituting $n=3$ into these expressions
shows that a regular 3-crosspolytope (i.e., the octahedron) inscribed
in a unit 3-sphere has 12 edges of length $2^{1/2}$ (which we denote
by $e$) and three inner diagonals of length $2$ (which we denote
by $d$). Therefore:
\begin{equation}
\prod_{i=1}^{N}c_{i}^{2}=\left(e^{2}\right)^{12}\left(d^{2}\right)^{3}=\left(2\right)^{12}\left(2^{2}\right)^{3}=2^{18}=\frac{2^{18+24q}}{2^{24q}}=\frac{2^{18+24q}}{4^{12q}}\label{eq:3d-product-of-all-octa}
\end{equation}
where $q$ is some integer.

Observe that we have $2$ as the numerator\textquoteright s base and
$4$ as the denominator\textquoteright s base (which are $\nu$ and
$\varepsilon$ for the octahedron, respectively), as desired. Next,
we consider the exponents. Recall that $E=12$ and $V=6$ for the
octahedron. For the denominator\textquoteright s exponent, observe
that $E$ divides $12q$ (as desired). For the numerator\textquoteright s
exponent, observe that (a) $18+24q\equiv0\equiv12\equiv E\:(\mathrm{mod\,}V)$
and (b) $18+24q\equiv6\equiv V\:(\mathrm{mod\,}E)$ (as desired).
Finally, $\frac{12}{\left(12,6\right)}=2$, which is the denominator's base
(as desired).

\emph{Cube.} First, we determine the distinct chord lengths of a cube inscribed in a unit 3-sphere.
Since a cube has eight vertices, we know there must be seven
chords (not necessarily distinct) emanating from a given vertex $\mathbf{P}$.
From Table \ref{tab:number-of-vertices-incident-to-edges-vs}, we
see that there are three edges emanating from $\mathbf{P}$ (three of the seven chords). Table \ref{tab:Edge-Length} in reveals that the cube's edges have length
$3^{-1/2}2$.

For the next distinct chord length, observe that three of the cube's square faces
meet at $\mathbf{P}$ so that each pair of faces
shares an edge emanating from $\mathbf{P}$
(cf. \cite{Coxeter1973}). Since a square has four vertices, each
of these three faces has one vertex that is \emph{not} joined
to $\mathbf{P}$ by an edge. Thus, each face has one diagonal
emanating from $\mathbf{P}$ and each of these three diagonals forms
an (outer) diagonal of the cube. Using Pythagorean's Theorem on the right triangle
formed by one of these diagonals and two adjacent edges shows that these outer diagonals have length $\sqrt{\left(3^{-1/2}2\right)^{2}+\left(3^{-1/2}2\right)^{2}}=\frac{2\sqrt{6}}{3}$. 

We have now determined the length of six of the seven chords emanating
from $\mathbf{P}$. For the seventh, recall
that a cube is centrally-symmetric and, hence, $\mathbf{P}$ has a vertex $\mathbf{P'}$ diametrically opposite to it. Hence, the chord $\mathbf{PP'}$ is a diameter.  Thus, it has length 2 (and is the seventh chord).

Next, we determine the total number of chords of these distinct lengths.
By Lemma \ref{Lemma:mV/2}, viz., $N_{i}=\frac{m_{i}V}{2}$, the cube
has $\frac{3\left(8\right)}{2}=12$ chords of length $3^{-1/2}2$,
$\frac{3\left(8\right)}{2}=12$ chords of length $\frac{2\sqrt{6}}{3}$,
and $\frac{1\left(8\right)}{2}=4$ chords of length 2.

Therefore, we have
\begin{equation}
\prod_{i=1}^{N}c_{i}^{2}=\left(\left(3^{-1/2}2\right)^{2}\right)^{12}\left(\left(\frac{2\sqrt{6}}{3}\right)^{2}\right)^{12}\left(\left(2\right)^{2}\right)^{4}=\frac{2^{68}}{3^{24}}.\label{eq:product-all-chords-cube}
\end{equation}
Observe that 2 is the numerator\textquoteright s base and
3 is the denominator\textquoteright s base ($\nu$ and $\varepsilon$
for the cube, respectively), as desired. Next, we consider the exponents.
Recall that $E=12$ and $V=8$ for the cube. For the denominator\textquoteright s
exponent, observe that $E$ divides 24 (as desired). For the numerator\textquoteright s exponent,
observe that (a) $68\equiv12\equiv E\:(\mathrm{mod}\,V)$ and
(b) $68\equiv8\equiv V\:(\mathrm{mod}\,E)$ (as desired).
Finally, $\frac{12}{\left(12,8\right)}=3$, which is the denominator's base.

\emph{Icosahedron.} The edge length of a regular icosahedron inscribed in a unit 3-sphere is $5^{-1/4}\tau^{-1/2}2$ (see Table
\ref{tab:Edge-Length}). The coordinates
for a regular icosahedron of edge length 2 are given in \cite{Coxeter1973}.
To find the distinct chord lengths of a regular icosahedron of edge
length $5^{-1/4}\tau^{-1/2}2$, we:
\begin{enumerate}
\item Select the coordinates of an arbitrary vertex (of a regular icosahedron
of edge length 2).
\item Compute the distance between this vertex and the other vertices (of
the regular icosahedron of edge length 2).
\item Divide each one of the $k$ distinct distances found in step 3 by
the edge length $2$.
\item Multiply each result in step 3 by the new edge length $5^{-1/4}\tau^{-1/2}2$.
\end{enumerate}
Comparing with \cite[p. 238, para. 3]{Coxeter1973} shows the validity of steps 3 and 4.

We choose the vertex $\left\langle \begin{array}{ccc}
0, & \tau, & 1\end{array}\right\rangle $ and compute the distance between it and the other vertices. To do this, we make repeated use of three
identities, namely, $\tau^{2}=\tau+1$, $\tau^{-1}=\tau-1$, and $\tau^{-2}=-\tau+2$ (given in \cite{Dunlap}):

$\left\langle \begin{array}{ccc}
0, & \tau, & 1\end{array}\right\rangle -\left\langle \begin{array}{ccc}
0, & \tau, & -1\end{array}\right\rangle =\left\Vert \left\langle \begin{array}{ccc}
0, & 0, & 2\end{array}\right\rangle \right\Vert =2$

$\left\langle \begin{array}{ccc}
0, & \tau, & 1\end{array}\right\rangle -\left\langle \begin{array}{ccc}
0, & -\tau, & 1\end{array}\right\rangle =\left\Vert \left\langle \begin{array}{ccc}
0, & 2\tau, & 0\end{array}\right\rangle \right\Vert =2\tau$

$\left\langle \begin{array}{ccc}
0, & \tau, & 1\end{array}\right\rangle -\left\langle \begin{array}{ccc}
0, & -\tau, & -1\end{array}\right\rangle =\left\Vert \left\langle \begin{array}{ccc}
0, & 2\tau, & 2\end{array}\right\rangle \right\Vert =2\sqrt{\tau+2}$

$\left\langle \begin{array}{ccc}
0, & \tau, & 1\end{array}\right\rangle -\left\langle \begin{array}{ccc}
1, & 0, & \tau\end{array}\right\rangle =\left\Vert \left\langle \begin{array}{ccc}
-1, & \tau, & 1-\tau\end{array}\right\rangle \right\Vert =2$

$\left\langle \begin{array}{ccc}
0, & \tau, & 1\end{array}\right\rangle -\left\langle \begin{array}{ccc}
1, & 0, & -\tau\end{array}\right\rangle =\left\Vert \left\langle \begin{array}{ccc}
-1, & \tau, & 1+\tau\end{array}\right\rangle \right\Vert =2\tau$

$\left\langle \begin{array}{ccc}
0, & \tau, & 1\end{array}\right\rangle -\left\langle \begin{array}{ccc}
-1, & 0, & \tau\end{array}\right\rangle =\left\Vert \left\langle \begin{array}{ccc}
1, & \tau, & 1-\tau\end{array}\right\rangle \right\Vert =2$

$\left\langle \begin{array}{ccc}
0, & \tau, & 1\end{array}\right\rangle -\left\langle \begin{array}{ccc}
-1, & 0, & -\tau\end{array}\right\rangle =\left\Vert \left\langle \begin{array}{ccc}
1, & \tau, & 1+\tau\end{array}\right\rangle \right\Vert =2\tau$

$\left\langle \begin{array}{ccc}
0, & \tau, & 1\end{array}\right\rangle -\left\langle \begin{array}{ccc}
\tau, & 1, & 0\end{array}\right\rangle =\left\Vert \left\langle \begin{array}{ccc}
-\tau, & \tau-1, & 1\end{array}\right\rangle \right\Vert =2$

$\left\langle \begin{array}{ccc}
0, & \tau, & 1\end{array}\right\rangle -\left\langle \begin{array}{ccc}
\tau, & -1, & 0\end{array}\right\rangle =\left\Vert \left\langle \begin{array}{ccc}
-\tau, & \tau+1, & 1\end{array}\right\rangle \right\Vert =2\tau$

$\left\langle \begin{array}{ccc}
0, & \tau, & 1\end{array}\right\rangle -\left\langle \begin{array}{ccc}
-\tau, & 1, & 0\end{array}\right\rangle =\left\Vert \left\langle \begin{array}{ccc}
\tau, & \tau-1, & 1\end{array}\right\rangle \right\Vert =2$

$\left\langle \begin{array}{ccc}
0, & \tau, & 1\end{array}\right\rangle -\left\langle \begin{array}{ccc}
-\tau, & -1, & 0\end{array}\right\rangle =\left\Vert \left\langle \begin{array}{ccc}
\tau, & \tau+1, & 1\end{array}\right\rangle \right\Vert =2\tau$

Thus, there are 3 distinct chord lengths: 2, $2\tau$, and $2\sqrt{\tau+2}$.
Dividing these by 2 yields 1, $\tau$, and $\sqrt{\tau+2}$, and multiplying
them by $5^{-1/4}\tau^{-1/2}2$ yields $5^{-1/4}\tau^{-1/2}2$, $5^{-1/4}\tau^{1/2}2$,
and $5^{-1/4}\tau^{-1/2}2\sqrt{\tau+2}$. It can be shown that the
last of these equals $2$.

From our computations, we see that, emanating from the vertex $\left\langle \begin{array}{ccc}
0, & \tau, & 1\end{array}\right\rangle $, there are 5 chords of length 2, 5 chords of length $2\tau$, and
one chord of length $2\sqrt{\tau+2}$. Thus, for the regular icosahedron
of edge length $5^{-1/4}\tau^{-1/2}2$, there are 5 chords of length
$5^{-1/4}\tau^{-1/2}2$, 5 chords of length $5^{-1/4}\tau^{1/2}2$,
and one chord of length $2$. Since a regular icosahedron has $12$
vertices, Lemma \ref{Lemma:mV/2}, viz., $N_{i}=\frac{m_{i}V}{2}$,
shows that there are a total of 30 chords of length $5^{-1/4}\tau^{-1/2}2$,
30 chords of length $5^{-1/4}\tau^{1/2}2$, and six chords of length
$2$. Therefore, we have
\begin{equation}
\prod_{i=1}^{N}c_{i}^{2}=\left(\left(5^{-1/4}\tau^{-1/2}2\right)^{2}\right)^{30}\left(\left(5^{-1/4}\tau^{1/2}2\right)^{2}\right)^{30}\left(\left(2\right)^{2}\right)^{6}=\frac{2^{132}}{5^{30}}.\label{eq:3D-product-of-all-ico}
\end{equation}
Observe that 2 is the numerator\textquoteright s base and
5 is the denominator\textquoteright s base ($\nu$ and $\varepsilon$
for the icosahedron, respectively). Next, we consider
the exponents. Recall that $E=30$ and $V=12$ for the icosahedron.
For the denominator\textquoteright s exponent, observe that $E$ divides
30 (as desired). For the numerator\textquoteright s exponent, observe
that (a) $132=12(11)=V(11)$ and therefore $132\equiv V\:(\mathrm{mod}\,V)$
and that (b) $132=30(4)+12=E(4)+V$ and therefore $132\equiv V\:(\mathrm{mod}\,E)$
as desired. Finally, $\frac{30}{\left(30,12\right)}=5$, which is
the denominator's base.

\emph{Dodecahedron.} The edge length of a regular dodecahedron inscribed in
a unit 3-sphere is $3^{-1/2}\tau^{-1}2$ (see Table
\ref{tab:Edge-Length}). The coordinates
for a regular dodecahedron of edge length $2\tau^{-1}$ are given
in \cite{Coxeter1973}. To find the distinct chord lengths of a regular
dodecahedron of edge length $3^{-1/2}\tau^{-1}2$, we apply the analogous four steps to the four steps listed in the ``Icosahedron case'' (replacing $5^{-1/4}\tau^{-1/2}2$ with $3^{-1/2}\tau^{-1}2$ and 2 with $2\tau^{-1}$).

We choose the vertex $\left\langle \begin{array}{ccc}
0, & \tau^{-1}, & \tau\end{array}\right\rangle$ (which we will denote by $\mathbf{P_0}$) and compute the distance between this vertex and the other vertices:

$\mathbf{P_0} -\left\langle \begin{array}{ccc}
0, & \tau^{-1}, & -\tau\end{array}\right\rangle =\left\Vert \left\langle \begin{array}{ccc}
0, & 0, & 2\tau\end{array}\right\rangle \right\Vert =2\tau$

$\mathbf{P_0} -\left\langle \begin{array}{ccc}
0, & -\tau^{-1}, & \tau\end{array}\right\rangle =\left\Vert \left\langle \begin{array}{ccc}
0, & 2\tau^{-1}, & 0\end{array}\right\rangle \right\Vert =2\tau^{-1}$

$\mathbf{P_0} -\left\langle \begin{array}{ccc}
0, & -\tau^{-1}, & -\tau\end{array}\right\rangle =\left\Vert \left\langle \begin{array}{ccc}
0, & 2\tau^{-1}, & 2\tau\end{array}\right\rangle \right\Vert =2\sqrt{3}$

$\mathbf{P_0}-\left\langle \begin{array}{ccc}
\tau, & 0, & \tau^{-1}\end{array}\right\rangle =\left\Vert \left\langle \begin{array}{ccc}
-\tau, & \tau^{-1}, & \tau-\tau^{-1}\end{array}\right\rangle \right\Vert =2$

$\mathbf{P_0} -\left\langle \begin{array}{ccc}
\tau, & 0, & -\tau^{-1}\end{array}\right\rangle =\left\Vert \left\langle \begin{array}{ccc}
-\tau, & \tau^{-1}, & \tau+\tau^{-1}\end{array}\right\rangle \right\Vert =2\sqrt{2}$

$\mathbf{P_0}-\left\langle \begin{array}{ccc}
-\tau, & 0, & \tau^{-1}\end{array}\right\rangle =\left\Vert \left\langle \begin{array}{ccc}
\tau, & \tau^{-1}, & \tau-\tau^{-1}\end{array}\right\rangle \right\Vert =$2

$\mathbf{P_0} -\left\langle \begin{array}{ccc}
-\tau, & 0, & -\tau^{-1}\end{array}\right\rangle =\left\Vert \left\langle \begin{array}{ccc}
\tau, & \tau^{-1}, & \tau+\tau^{-1}\end{array}\right\rangle \right\Vert =2\sqrt{2}$

$\mathbf{P_0} -\left\langle \begin{array}{ccc}
\tau^{-1}, & \tau, & 0\end{array}\right\rangle =\left\Vert \left\langle \begin{array}{ccc}
-\tau^{-1}, & \tau^{-1}-\tau, & \tau\end{array}\right\rangle \right\Vert =2$

$\mathbf{P_0}-\left\langle \begin{array}{ccc}
\tau^{-1}, & -\tau, & 0\end{array}\right\rangle =\left\Vert \left\langle \begin{array}{ccc}
-\tau^{-1}, & \tau^{-1}+\tau, & \tau\end{array}\right\rangle \right\Vert =2\sqrt{2}$

$\mathbf{P_0} -\left\langle \begin{array}{ccc}
-\tau^{-1}, & \tau, & 0\end{array}\right\rangle =\left\Vert \left\langle \begin{array}{ccc}
\tau^{-1}, & \tau^{-1}-\tau, & \tau\end{array}\right\rangle \right\Vert =2$

$\mathbf{P_0} -\left\langle \begin{array}{ccc}
-\tau^{-1}, & -\tau, & 0\end{array}\right\rangle =\left\Vert \left\langle \begin{array}{ccc}
\tau^{-1}, & \tau^{-1}+\tau, & \tau\end{array}\right\rangle \right\Vert =2\sqrt{2}$

$\mathbf{P_0} -\left\langle \begin{array}{ccc}
1, & 1, & 1\end{array}\right\rangle =\left\Vert \left\langle \begin{array}{ccc}
-1, & \tau^{-1}-1, & \tau-1\end{array}\right\rangle \right\Vert =2\tau^{-1}$

$\mathbf{P_0} -\left\langle \begin{array}{ccc}
1, & 1, & -1\end{array}\right\rangle =\left\Vert \left\langle \begin{array}{ccc}
-1, & \tau^{-1}-1, & \tau+1\end{array}\right\rangle \right\Vert =2\sqrt{2}$

$\mathbf{P_0} -\left\langle \begin{array}{ccc}
1, & -1, & 1\end{array}\right\rangle =\left\Vert \left\langle \begin{array}{ccc}
-1, & \tau^{-1}+1, & \tau-1\end{array}\right\rangle \right\Vert =2$

$\mathbf{P_0} -\left\langle \begin{array}{ccc}
-1, & 1, & 1\end{array}\right\rangle =\left\Vert \left\langle \begin{array}{ccc}
1, & \tau^{-1}-1, & \tau-1\end{array}\right\rangle \right\Vert =2\tau^{-1}$

$\mathbf{P_0} -\left\langle \begin{array}{ccc}
1, & -1, & -1\end{array}\right\rangle =\left\Vert \left\langle \begin{array}{ccc}
-1, & \tau^{-1}+1, & \tau+1\end{array}\right\rangle \right\Vert =2\tau$

$\mathbf{P_0} -\left\langle \begin{array}{ccc}
-1, & -1, & 1\end{array}\right\rangle =\left\Vert \left\langle \begin{array}{ccc}
1, & \tau^{-1}+1, & \tau-1\end{array}\right\rangle \right\Vert =2$

$\mathbf{P_0} -\left\langle \begin{array}{ccc}
-1, & 1, & -1\end{array}\right\rangle =\left\Vert \left\langle \begin{array}{ccc}
1, & \tau^{-1}-1, & \tau+1\end{array}\right\rangle \right\Vert =2\sqrt{2}$

$\mathbf{P_0} -\left\langle \begin{array}{ccc}
-1, & -1, & -1\end{array}\right\rangle =\left\Vert \left\langle \begin{array}{ccc}
1, & \tau^{-1}+1, & \tau+1\end{array}\right\rangle \right\Vert =2\tau$

Thus, there are 5 distinct chord lengths: $2\tau$, $2\tau^{-1}$,
$2\sqrt{3}$, 2, and $2\sqrt{2}$. Dividing these by $2\tau^{-1}$
yields $\tau^{2}$, $1$, $\tau\sqrt{3}$, $\tau$, and $\tau\sqrt{2}$,
and multiplying them by $3^{-1/2}\tau^{-1}2$ yields $3^{-1/2}\tau2$,
$3^{-1/2}\tau^{-1}2$, $2$, $3^{-1/2}2$, and $3^{-1/2}2\sqrt{2}$.

From our computations, we see that, emanating from the vertex
$\left\langle \begin{array}{ccc}
0, & \tau^{-1}, & \tau\end{array}\right\rangle $, there are three chords of length $2\tau$, three chords of length
$2\tau^{-1}$, one chord of length $2\sqrt{3}$, six chords of length
2, and six chords of length $2\sqrt{2}$. Thus, for the regular dodecahedron
of edge length $3^{-1/2}\tau^{-1}2$, there are three chords of length
$3^{-1/2}\tau2$, three chords of length $3^{-1/2}\tau^{-1}2$, one
chord of length $2$, six chords of length $3^{-1/2}2$, and six chords
of length $3^{-1/2}2\sqrt{2}$. Since a regular dodecahedron has $20$
vertices, Lemma \ref{Lemma:mV/2}, viz., $N_{i}=\frac{m_{i}V}{2}$,
shows that there are a total of 30 chords of length $3^{-1/2}\tau2$,
30 chords of length $3^{-1/2}\tau^{-1}2$, 10 chord of length $2$,
60 chords of length $3^{-1/2}2$, and 60 chords of length $3^{-1/2}2\sqrt{2}$.
Therefore, we have $\prod_{i=1}^{N}c_{i}^{2}=$
\begin{equation}
\left(\left(\frac{\tau2}{\sqrt{3}}\right)^{2}\right)^{30}\left(\left(\frac{2}{\tau\sqrt{3}}\right)^{2}\right)^{30}\left(2^{2}\right)^{10}\left(\left(\frac{2}{\sqrt{3}}\right)^{2}\right)^{60}\left(\left(2\sqrt{\frac{2}{3}}\right)^{2}\right)^{60}=\frac{2^{440}}{3^{180}}.\label{eq:product-of-all-dodeca}
\end{equation}
Observe that 2 is the numerator\textquoteright s base and
3 is the denominator\textquoteright s base ($\nu$ and $\varepsilon$
for the dodecahedron, respectively). Next, we consider
the exponents. Recall that $E=30$ and $V=20$ for the dodecahedron.
For the denominator\textquoteright s exponent, observe that $E$ divides 180 as desired. For the numerator\textquoteright s
exponent, observe that (a) $440=20(22)=V(22)$ and therefore $440\equiv V\:(\mathrm{mod}\,V)$
and that (b) $440=30(14)+20=E(14)+V$ and therefore $440\equiv V\:(\mathrm{mod}\,E)$
as desired. Finally, $\frac{30}{\left(30,20\right)}=3$, which is
the denominator's base.
\end{proof}

\subsection{Discussion}

Theorem \ref{Thm:product-of-all-chords} shows that Fact 3 does, in
fact, apply to at least one family of regular $n$-polytopes (where $n \geq 2$)\textemdash
i.e., the family of regular $n$-crosspolytopes\textemdash
\emph{if the ``number of edges'' ($E$) in the base is replaced
by ``number of facets'' ($F$) and the ``number of edges'' ($E$)
in the exponent is replaced by ``number of vertices'' ($V$).} These
replacements are valid since, for 2-polytopes, the facets are, in
fact, the edges and $E=V$.

Theorem \ref{Thm:24-cell-product-of-all-chords} shows that Fact 3
does \emph{not }generalize to the 24-cell but does show that, for
the 24-cell (inscribed in a unit 4-sphere), the product of the squared
chord lengths is a ``very nice'' integer ($6^{E}$, which is equal
to $6^{R}$).

Finally, our proof of Theorem \ref{Thm:3D-product-of-all} shows that
Fact 3 does \emph{not} generalize to the regular 3-simplex, 3-cube,
icosahedron, or dodecahedron and therefore \emph{cannot} generalize
to all regular $n$-simplices or to all $n$-cubes (where $n \geq 2$)
and \emph{probably} does not generalize to the remaining
regular $n$-simplices or $n$-cubes (i.e.,
those of dimension $n>3$) or to the 600-cell (i.e., the ``hypericosahedron'')
or the 120-cell (i.e., the ``hyperdodecahedron'').

\section{FACT 4: Product of Squared Distinct Chords}

The fourth 2-polytope fact stated that, for any regular 2-polytope
inscribed in a unit 2-sphere, the product of its squared distinct
chord lengths equals (a) $E$ (when $E$ is odd) and (b) some integer
(when $E$ is even). We test generalization of this fact to (a) the
regular $n$-crosspolytopes ($n \geq 2$), (b) the 24-cell, and (c) the
other regular polytopes. We need no additional information.

\subsection{Generalizing Fact 4 to Crosspolytopes}
\begin{thm}
\label{Thm:crossPT-product-of-distinct}
Let $\mathcal{P}$ be a regular
$n$-crosspolytope inscribed in a unit $n$-sphere ($n \geq 2$) with $k$ distinct chords. Let $d_{i}$ be
the $i$\textsuperscript{th} distinct chord length.
Then $\prod_{i=1}^{k}d_{i}^{2} = 8$.
\end{thm}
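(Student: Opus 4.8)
The plan is to invoke the structural description of the chords of an $n$-crosspolytope that was already worked out in the proofs of Theorem~\ref{Cor:special-case-sum-of-distinct} and Theorem~\ref{Thm:product-of-all-chords}. There it was shown that a regular $n$-crosspolytope inscribed in a unit $n$-sphere ($n\geq2$) has chords of exactly two types: (a) its edges, every one of which has length $e=2^{1/2}$ (see Table~\ref{tab:Edge-Length}), and (b) its inner diagonals $\mathbf{PP'}$, joining the $n$ pairs of antipodal vertices, each of which is a diameter of the unit $n$-sphere and hence has length $2$. In particular, both lengths genuinely occur for every $n\geq2$, so there are exactly $k=2$ distinct chord lengths, with $d_1=2^{1/2}$ and $d_2=2$.

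Given this, the computation is immediate. First I would record $d_1^2=2$ and $d_2^2=4$, and then multiply:
\[
\prod_{i=1}^{k}d_i^{2}=d_1^{2}\,d_2^{2}=\left(2^{1/2}\right)^{2}\cdot 2^{2}=2\cdot 4=8.
\]

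There is essentially no obstacle in this argument: the only nontrivial input, namely that the $n$-crosspolytope has precisely these two distinct chord lengths, has already been established in the earlier proofs, so no new geometry or computation is required. If desired, one could add the closing remark that, since $k=2$, the value $8$ can be written as $2^{k+1}$, which makes the result comparable in form to the ``$2k+2$'' appearing in Theorem~\ref{Thm:sum-of-distinct-chords}.
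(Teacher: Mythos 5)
Your proposal is correct and follows essentially the same route as the paper's own proof: both cite the earlier crosspolytope analysis to identify the two distinct chord lengths $2^{1/2}$ (edges) and $2$ (inner diagonals) and then compute $\left(2^{1/2}\right)^{2}(2)^{2}=8$. The only difference is that you make the value $k=2$ and the observation $8=2^{k+1}$ explicit, which the paper leaves implicit.
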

\begin{proof}
Recall from the proof of Theorem
\ref{Thm:product-of-all-chords}
that the chords of a regular $n$-crosspolytope inscribed in a unit
$n$-sphere ($n\geq2$) consist of edges of length $2^{1/2}$
and inner diagonals of length $2$. Hence, $\prod_{i=1}^{N}d_{i}^{2}=\left(2^{1/2}\right)^{2}(2)^{2}=8$.
\end{proof}
\begin{cor}
\label{Cor:crossPTs-product-of-distinct}Let $\mathcal{Q}$ be a regular
$n$-cube circumscribed about a unit $n$-sphere (where $n \geq 2$).
Let $t_{i}$, for $i=1,2,...,k$, be the distinct lengths of the line
segments whose endpoints are centers of facets. Then $\prod_{i=1}^{k}t_{i}^{2} = 8$.
\end{cor}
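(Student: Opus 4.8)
The plan is to reciprocate, exactly along the lines of the proof of Corollary~\ref{DualCor:product-of-all}. First I would invoke the consequence of regularity recorded in the preliminaries: when a regular $n$-polytope is circumscribed about an $n$-sphere ($n\geq 2$), it may be reciprocated with respect to that sphere to form a regular $n$-polytope inscribed in the sphere, and this reciprocation maps the original polytope's facets to the new polytope's vertices \cite{Coxeter1973}. Applying this to the circumscribed $n$-cube $\mathcal{Q}$, the centers of the facets of $\mathcal{Q}$ are the vertices of an inscribed regular $n$-polytope $\mathcal{P}$; moreover, the reciprocal of a regular $n$-cube is a regular $n$-crosspolytope \cite{Coxeter1973}, so $\mathcal{P}$ is a regular $n$-crosspolytope inscribed in the unit $n$-sphere.

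Next I would observe that, since the vertices of $\mathcal{P}$ are precisely the facet centers of $\mathcal{Q}$, the line segments whose endpoints are centers of facets of $\mathcal{Q}$ are exactly the chords of $\mathcal{P}$. Hence the multiset of their lengths coincides with the multiset of chord lengths of $\mathcal{P}$, so the set of distinct values $\{t_1,\dots,t_k\}$ is the set of distinct chord lengths of $\mathcal{P}$, and in particular the integer $k$ is the same on both sides. Therefore $\prod_{i=1}^{k} t_i^{2} = \prod_{i=1}^{k} d_i^{2}$, where $d_i$ denotes the $i$\textsuperscript{th} distinct chord length of $\mathcal{P}$.

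Finally I would apply Theorem~\ref{Thm:crossPT-product-of-distinct} to the inscribed regular $n$-crosspolytope $\mathcal{P}$, which gives $\prod_{i=1}^{k} d_i^{2} = 8$, and conclude $\prod_{i=1}^{k} t_i^{2} = 8$, as claimed.

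There is essentially no hard step here: the argument is a verbatim analogue of the reciprocation arguments in Corollaries~\ref{DualCor:sum-of-all-chords}, \ref{Cor:dual-to-sum-of-distinct}, and \ref{DualCor:product-of-all}, and the only ingredients used beyond that template are that reciprocation sends facets to vertices and that the dual of an $n$-cube is an $n$-crosspolytope, both already in hand. The one point worth stating carefully — though it is immediate once the segment set is identified with the chord set — is that the distinct-length count $k$ agrees on the two sides.
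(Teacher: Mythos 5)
Your proposal is correct and matches the paper's intended argument exactly: the paper leaves this corollary to the same one-line reciprocation reasoning it uses for Corollary~\ref{DualCor:product-of-all} (the facet centers of the circumscribed $n$-cube are the vertices of an inscribed regular $n$-crosspolytope, so Theorem~\ref{Thm:crossPT-product-of-distinct} applies). No gaps.
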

\subsection{Generalizing Fact 4 to the 24-cell}
\begin{thm}
\label{Thm:24-cell-product-of-distinct}Let $\mathcal{P}$ be a 24-cell
that is inscribed in a unit $4$-sphere and has $V$ vertices, $F$
facets, and $k$ distinct chord lengths. Let $d_{i}$ denote the $i$\textsuperscript{th}
distinct chord length of $\mathcal{P}$. Then 
\[\prod_{i=1}^{k}d_{i}^{2}=F=V.\]
\end{thm}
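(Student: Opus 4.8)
The plan is to reuse the chord-length data for the 24-cell that was already compiled in Table \ref{tab:24-cell-simplified-sects-1} during the proof of Theorem \ref{Thm:24-cell-product-of-all-chords}. That table records the four distinct chord lengths of a 24-cell inscribed in a unit $4$-sphere, together with the squares $d_i^2$. Since the present theorem concerns only the \emph{distinct} chord lengths, I would simply read off $d_1^2=1$, $d_2^2=2$, $d_3^2=3$, and $d_4^2=4$, and form the product $\prod_{i=1}^{k}d_i^2 = 1\cdot 2\cdot 3\cdot 4 = 24$.

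Next I would identify $24$ with the relevant face cardinalities. From Table \ref{tab:j-Face-Cardinality-and-Shape}, the 24-cell has $V=24$ vertices and $F=24$ facets (these are the $0$-faces and the $3$-faces, respectively). Hence $\prod_{i=1}^{k}d_i^2 = 24 = V = F$, which is exactly the claimed equality. The only subtlety worth a sentence is confirming that the four entries of the table really do exhaust the distinct chord lengths — but this was already established in the proof of Theorem \ref{Thm:24-cell-product-of-all-chords} via the simplified sections $\mathcal{S}_1,\dots,\mathcal{S}_4$, so I would just cite that.

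I do not expect any genuine obstacle here: the computational heavy lifting (extracting the chord lengths from Coxeter's Table V(i)) was done in the earlier theorem, and this result is essentially a one-line corollary of that data — notable only because the product lands on the "nice" value $24$, matching both $V$ and $F$. If I wanted to make the proof self-contained rather than leaning on the earlier table, I would instead re-derive the four squared lengths $1,2,3,4$ directly, but given the structure of the paper it is cleaner to reference Table \ref{tab:24-cell-simplified-sects-1} and the proof of Theorem \ref{Thm:24-cell-product-of-all-chords}. I would close by remarking (as the paper's "Discussion" style suggests) that, unlike Fact 3, Fact 4 fails to generalize to the 24-cell in its original form, but the product is still an integer — indeed the especially clean integer $V=F=24$.
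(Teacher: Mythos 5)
Your proof is correct and follows essentially the same route as the paper: read off the four distinct squared chord lengths $1,2,3,4$ established in the proof of Theorem \ref{Thm:24-cell-product-of-all-chords}, multiply to get $4!=24$, and match this with $V=F=24$ from Table \ref{tab:j-Face-Cardinality-and-Shape}. One small correction to your closing remark: the paper's discussion concludes that Fact 4 \emph{does} apply to the 24-cell (the even-$E$ case of Fact 4 only asserts the product is ``some integer,'' which $24$ certainly is), so you have the interpretive conclusion backwards even though the computation is right.
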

\begin{proof}
Recall from the proof of Theorem \ref{Thm:24-cell-product-of-all-chords}
that a regular $24$-cell inscribed in a unit $4$-sphere has four
distinct chord lengths: 1, $2^{1/2}$, $3^{1/2},$ and 2.  Squaring these yields 1, 2, 3, and 4.  Thus, $\prod_{i=1}^{k}d_{i}^{2}=4!=24$.
Table \ref{tab:j-Face-Cardinality-and-Shape} reveals that the 24-cell
has 24 vertices and 24 facets. Therefore: $\prod_{i=1}^{k}d_{i}^{2}=F=V$.
\end{proof}
\begin{cor}
\label{DualCor:product-of-distinct}Let $\mathcal{Q}$ be a 24-cell
\emph{circumscribed} about a unit $4$-sphere with $V$ vertices and
$F$ facets. Let $t_{i}$, for $i=1,2,...,k$, be the distinct lengths
of the line segments whose endpoints are centers of facets. Then $\prod_{i=1}^{k}t{}_{i}^{2}=V=F$. 
\end{cor}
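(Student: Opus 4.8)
The plan is to follow the same reciprocation template used for Corollaries \ref{DualCor:product-of-all} and \ref{Cor:crossPTs-product-of-distinct}, exploiting the fact that the 24-cell is self-dual. First I would observe that the centers of the facets of $\mathcal{Q}$ are precisely the vertices of a regular $4$-polytope $\mathcal{P}$ obtained by reciprocating $\mathcal{Q}$ with respect to the unit $4$-sphere about which $\mathcal{Q}$ is circumscribed (cf. \cite{Coxeter1973}); since the reciprocal of a 24-cell is a 24-cell, $\mathcal{P}$ is a 24-cell inscribed in the unit $4$-sphere. Because the vertices of $\mathcal{P}$ are the facet-centers of $\mathcal{Q}$, the line segments joining facet-centers of $\mathcal{Q}$ are exactly the chords of $\mathcal{P}$, so the distinct lengths $t_i$ ($i=1,\ldots,k$) coincide with the distinct chord lengths $d_i$ of $\mathcal{P}$.

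Next I would apply Theorem \ref{Thm:24-cell-product-of-distinct} to $\mathcal{P}$, which gives $\prod_{i=1}^{k} t_i^2 = \prod_{i=1}^{k} d_i^2 = F' = V'$, where $F'$ and $V'$ denote the number of facets and vertices of $\mathcal{P}$, respectively. It then remains to translate $F'$ and $V'$ back into invariants of $\mathcal{Q}$. The reciprocation maps the $(j-1)$-faces of $\mathcal{Q}$ to the $(4-j)$-faces of $\mathcal{P}$; taking $j=1$ shows the facets ($3$-faces) of $\mathcal{Q}$ correspond bijectively to the vertices ($0$-faces) of $\mathcal{P}$, so $F = V'$, and taking $j=4$ shows the vertices of $\mathcal{Q}$ correspond bijectively to the facets of $\mathcal{P}$, so $V = F'$. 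Combining, $\prod_{i=1}^{k} t_i^2 = F' = V' = V = F$.

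I do not expect any genuine obstacle here: the argument is a routine transport along the reciprocation map, and the only point requiring a moment's care is confirming that the 24-cell is self-dual (so that Theorem \ref{Thm:24-cell-product-of-distinct} applies to $\mathcal{P}$ in the first place) and bookkeeping the face-correspondence so that the chain $F = V' $, $V = F'$ comes out correctly. Since Theorem \ref{Thm:24-cell-product-of-distinct} already established $F = V = 24$ for an inscribed 24-cell, one could alternatively shortcut the bookkeeping by simply noting that every 24-cell has $24$ vertices and $24$ facets, making the identifications automatic; I would mention this as a remark but prefer the face-correspondence phrasing for consistency with the earlier dual corollaries.
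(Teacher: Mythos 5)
Your proposal is correct and follows exactly the route the paper takes: the paper's own proof is the one-line observation that the 24-cell is self-reciprocal, relying implicitly on the same reciprocation-transport argument you spell out. You have simply made explicit the bookkeeping ($F = V'$, $V = F'$) that the paper leaves to the reader.
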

\begin{proof}
Note that a regular 24-cell is self-reciprocal \cite{Coxeter1973}.
\end{proof}

\subsection{Generalizing Fact 4 to Other Regular Polytopes}
\begin{thm}
\label{3DThm:product-of-distinct}Let $\mathcal{P}$ be a regular
3-polytope with $E$ edges and $V$ vertices inscribed in a unit 3-sphere,
and let $a$ and $b$ be the exponents in Theorem \ref{Thm:3D-product-of-all}.
Then
\[
\prod_{i=1}^{k}d_{i}^{2}=\frac{\nu^{c}}{\varepsilon^{d}}
\]
where $d_{i}$ is the length of each $i$\textsuperscript{th} distinct
chord of $\mathcal{P}$, $\nu$ is the number of vertices incident
to any edge, $\varepsilon$ is the number of edges incident to any
vertex, and $c,d\in\mathbb{Z}$ such that

\begin{enumerate}
\item for the self-dual tetrahedron, $b=dE$, 
\item for the cube and icosahedron, $b=dE$, and
\item for the octahedron and dodecahedron, $a=cVm$ (where $m=1$ or $2$,
respectively).
\end{enumerate}
In addition, the number $\varepsilon$ in the equation above may be
replaced by either
\[
\frac{E}{\left(E,V\right)}=\frac{\left[E,V\right]}{V}.
\]
\end{thm}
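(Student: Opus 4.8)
The plan is to follow the proof of Theorem \ref{Thm:3D-product-of-all} step for step, treating the five regular 3-polytopes (tetrahedron, octahedron, cube, icosahedron, dodecahedron) as five separate cases. In each case the distinct squared chord lengths have \emph{already} been enumerated in the proof of Theorem \ref{Thm:3D-product-of-all}, so the only genuinely new computation is to multiply those \emph{distinct} squared lengths together (instead of raising each to its multiplicity $N_i$) and then to rewrite the resulting rational number in the prescribed form $\nu^{c}/\varepsilon^{d}$. Note that $\nu=2$ for every regular 3-polytope (every edge has two vertices; cf. Table \ref{tab:number-of-vertices-incident-to-edges-vs}), so in all five cases the numerator base is $2$, and the task is really to track the exponents.

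First I would record the lists of distinct squared chord lengths read off from the proof of Theorem \ref{Thm:3D-product-of-all}: $\{8/3\}$ for the tetrahedron; $\{2,4\}$ for the octahedron; $\{4/3,\,8/3,\,4\}$ for the cube; $\{4/(\tau\sqrt5),\,4\tau/\sqrt5,\,4\}$ for the icosahedron; and $\{4\tau^{2}/3,\,4/(3\tau^{2}),\,4,\,4/3,\,8/3\}$ for the dodecahedron. Forming each product (using $\tau^{2}=\tau+1$ only to see the $\tau$-factors cancel in the last two cases) gives $8/3$, $8$, $128/9$, $64/5$, and $2048/81$, i.e. $2^{3}/3^{1}$, $2^{3}$, $2^{7}/3^{2}$, $2^{6}/5^{1}$, and $2^{11}/3^{4}$. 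In every case the numerator base is $\nu=2$, and wherever the denominator is nontrivial its base is exactly $\varepsilon$; since $E/(E,V)$ equals $3,2,3,5,3$ for the five polytopes, this base coincides with $\varepsilon=E/(E,V)$ for the tetrahedron, cube, icosahedron, and dodecahedron, while for the octahedron (where $\varepsilon=4$ but $E/(E,V)=2$) the denominator $4^{d}$ is rewritten as $2^{2d}$ exactly as in Theorem \ref{Thm:3D-product-of-all}; this disposes of the closing ``$\varepsilon$ may be replaced'' clause.

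It then remains to exhibit integers $c,d$ obeying the stated relations to the exponents $a,b$ of Theorem \ref{Thm:3D-product-of-all}. For the tetrahedron ($a=18$, $b=6$) take $c=3$, $d=1$ and check $b=dE$; for the cube ($a=68$, $b=24$) take $c=7$, $d=2$ and check $b=dE$; for the icosahedron ($a=132$, $b=30$) take $c=6$, $d=1$ and check $b=dE$; for the dodecahedron ($a=440$, $b=180$) take $c=11$, $d=4$ and check $a=cVm$ with $m=2$ (indeed $11\cdot 20\cdot 2=440$). The octahedron is the one case where Theorem \ref{Thm:3D-product-of-all} carries a free integer parameter $q$ (there $a=18+24q$ and $b=12q$ with $q$ a positive integer); here I would take $c=3+4q$ and $d=2q$, so that $\nu^{c}/\varepsilon^{d}=2^{3+4q}/4^{2q}=8$ while $a=cVm$ holds with $m=1$. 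Assembling the five cases then yields the theorem.

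I expect the only real obstacle to be organizational rather than conceptual: one must carry along the non-uniqueness already present in Theorem \ref{Thm:3D-product-of-all} (especially the octahedron's parameter $q$) in such a way that the relation $a=cVm$ and the permissibility of writing $E/(E,V)$ in place of $\varepsilon$ remain simultaneously valid, and one must double-check that the golden-ratio factors in the icosahedron and dodecahedron products genuinely cancel, leaving a clean power of $2$ over a power of the single prime $\varepsilon$.
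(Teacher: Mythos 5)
Your proposal is correct and follows essentially the same route as the paper's own proof: a five-way case analysis that reuses the distinct chord lengths already computed in the proof of Theorem \ref{Thm:3D-product-of-all}, forms the products $2^{3}/3$, $2^{3+4q}/4^{2q}$, $2^{7}/3^{2}$, $2^{6}/5$, and $2^{11}/3^{4}$, and then verifies the exponent relations $b=dE$ or $a=cVm$ together with the identification of the denominator base with $E/(E,V)=[E,V]/V$ (handling the octahedron's $\varepsilon=4$ versus $E/(E,V)=2$ by rewriting $4^{2q}=2^{4q}$, exactly as the paper does). All of your numerical values match the paper's equations and chosen exponents.
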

\begin{proof} We consider each of these five cases in turn: (a) the
tetrahedron, (b) the cube, (c) the icosahedron, (d) the octahedron,
and (e) the dodecahedron.

\emph{Tetrahedron.} Recall that a regular tetrahedron inscribed in
a unit 3-sphere has one type of chord: edges of length $\frac{2\sqrt{6}}{3}$.
Therefore, we have 
\begin{equation}
\prod_{i=1}^{k}d_{i}^{2}=\left(\frac{2\sqrt{6}}{3}\right)^{2}=\frac{2^{3}}{3}.\label{eq:product-of-distinct-tetra}
\end{equation}
Observe that 2 is the numerator\textquoteright s base and
3 is the denominator\textquoteright s base ($\nu$ and $\varepsilon$, respectively), as desired. For the denominator\textquoteright s
exponent, recall from the proof of Theorem \ref{Thm:3D-product-of-all}
that $E=6$ and $b=6$ for the tetrahedron. Observe that $d=1=\frac{6}{6}=\frac{b}{E}$,
so $b=dE$ (as desired). Finally, recall from the proof of Theorem
\ref{Thm:3D-product-of-all} that, for the tetrahedron, $\frac{E}{\left(E,V\right)}=\frac{\left[E,V\right]}{V}=3$,
which is the denominator's base.

\emph{Cube.} Recall from the proof of Theorem \ref{Thm:3D-product-of-all}
that a cube inscribed in a unit 3-sphere has three types of chords:
edges of length $3^{-1/2}2$, outer diagonals of length $\frac{2\sqrt{6}}{3}$,
and inner diagonals of length 2. Therefore, we have
\begin{equation}
\prod_{i=1}^{k}d_{i}^{2}=\left(3^{-1/2}2\right)^{2}\left(\frac{2\sqrt{6}}{3}\right)^{2}(2)^{2}=\frac{2^{7}}{3^{2}}.\label{eq:product-of-distinct-cube}
\end{equation}
Observe that the numerator\textquoteright s base is 2 and
the denominator\textquoteright s base is 3 ($\nu$ and $\varepsilon$, respectively). For the denominator\textquoteright s
exponent, recall from the proof of Theorem \ref{Thm:3D-product-of-all}
that $E=12$ and $b=24$ for the cube. Observe that $d=2=\frac{24}{12}=\frac{b}{E}$, so $b=dE$. Finally, recall from the proof of Theorem
\ref{Thm:3D-product-of-all} that, for the cube, $\frac{E}{\left(E,V\right)}=\frac{\left[E,V\right]}{V}=3$,
which is the denominator's base.

\emph{Icosahedron.} Recall that a regular icosahedron inscribed in
a unit 3-sphere has three distinct chord lengths: $\frac{2}{5^{1/4}\tau^{1/2}}$,
$\frac{2\tau^{1/2}}{5^{1/4}}$, and 2. Therefore:
\begin{equation}
\prod_{i=1}^{k}d_{i}^{2}=\left(\frac{2}{5^{1/4}\tau^{1/2}}\right)^{2}\left(\frac{2\tau^{1/2}}{5^{1/4}}\right)^{2}(2)^{2}=\frac{2^{6}}{5}.\label{eq:product-of-distinct-ico}
\end{equation}
Observe that the numerator\textquoteright s base is 2 and
the denominator\textquoteright s base is 5 ($\nu$ and $\varepsilon$, respectively). For the denominator\textquoteright s
exponent, recall from the proof of Theorem \ref{Thm:3D-product-of-all}
that $E=30$ and $b=30$ for the icosahedron. Observe that $d=1=\frac{30}{30}=\frac{b}{E}$, so $b=dE$ (as desired). Finally, recall from the proof
of Theorem \ref{Thm:3D-product-of-all} that, for the icosahedron,
$\frac{E}{\left(E,V\right)}=\frac{\left[E,V\right]}{V}=5$, which
is the denominator's base.

\emph{Octahedron.} Recall the proof of Theorem \ref{Thm:3D-product-of-all}
that a regular octahedron inscribed in a unit 3-sphere has two types
of chords: edges of length $\sqrt{2}$ and inner diagonals of length
2. Therefore, we have
\begin{equation}
\prod_{i=1}^{k}d_{i}^{2}=\left(\sqrt{2}\right)^{2}(2)^{2}=\frac{2^{3+4q}}{2^{4q}}=\frac{2^{3+4q}}{4^{2q}}.\label{eq:product-of-distinct-octa}
\end{equation}
where $q$ is the same as in the proof of Theorem \ref{Thm:3D-product-of-all}.
Observe that the numerator\textquoteright s base is 2 and the denominator\textquoteright s base is 4 ($\nu$ and $\varepsilon$, respectively). For the numerator\textquoteright s
exponent, recall from the proof of Theorem \ref{Thm:3D-product-of-all}
that $V=6$ and $a=18+24q$ for the octahedron. Observe that
$c=3+4q=\frac{18+24q}{6}=\frac{a}{V}$, so $a=cV\cdot1=cVm$.
Finally, recall from the proof of Theorem \ref{Thm:3D-product-of-all}
that, for the octahedron, $\frac{E}{\left(E,V\right)}=\frac{\left[E,V\right]}{V}=2$,
which is the base of the denominator in $\frac{2^{3+4q}}{2^{4q}}$.

\emph{Dodecahedron.} Recall that a regular dodecahedron inscribed
in a unit 3-sphere has five distinct chord lengths: $\frac{2\tau}{\sqrt{3}}$,
$\frac{2}{\sqrt{3}\tau}$, 2, $\frac{2}{\sqrt{3}}$, and $\frac{2\sqrt{2}}{\sqrt{3}}$.
Therefore:
\begin{equation}
\prod_{i=1}^{k}d_{i}^{2}=\left(\frac{2\tau}{\sqrt{3}}\right)^{2}\left(\frac{2}{\sqrt{3}\tau}\right)^{2}\left(2\right)^{2}\left(\frac{2}{\sqrt{3}}\right)^{2}\left(\frac{2\sqrt{2}}{\sqrt{3}}\right)^{2}=\frac{2^{11}}{3^{4}}.\label{eq:product-of-distinct-dodeca}
\end{equation}
Observe that the numerator\textquoteright s base is 2 and the denominator's base is 3 ($\nu$ and $\varepsilon$, respectively). For the numerator\textquoteright s
exponent, recall from the proof of Theorem \ref{Thm:3D-product-of-all}
that $V=20$ and $a=440$ for the dodecahedron. Observe that
$c=11=\frac{440}{40}=\frac{a}{2V}$, so $a=cV\cdot2=cVm$.
Finally, recall from the proof of Theorem \ref{Thm:3D-product-of-all}
that, for the dodecahedron, $\frac{E}{\left(E,V\right)}=\frac{\left[E,V\right]}{V}=3$,
which is the denominator's base.
\end{proof}

\subsection{Discussion}

Theorem \ref{Thm:crossPT-product-of-distinct} shows that Fact 4 does,
in fact, apply to the regular $n$-crosspolytopes (where $n\geq2$) since they have \emph{even}
$E$ and, thus, fall under
the ``even $E$'' case of Fact 4.

Theorem \ref{Thm:24-cell-product-of-distinct} shows that Fact 4 also
applies to the 24-cell (which has even $E$) and shows, for the 24-cell, that the ``some integer'' is the ``number
of facets'' ($F$) or, equivalently, the ``number of vertices''
($V$).

Finally, our proof of Theorem \ref{3DThm:product-of-distinct} shows
that Fact 4 does \emph{not} generalize to the regular 3-simplex, 3-cube,
icosahedron, or dodecahedron and therefore \emph{cannot} generalize
to all regular $n$-simplices or all $n$-cubes ($n \geq 2$)
and \emph{probably} does not generalize to the remaining
regular $n$-simplices or $n$-cubes (i.e.,
those of dimension $n>3$) or to the 600-cell (i.e., the ``hypericosahedron'')
or the 120-cell (i.e., the ``hyperdodecahedron'').
\section{Conclusion}
We have succeeded in finding a single rule
that applies to the chord lengths of \emph{all} regular $n$-polytopes, $n \geq 2$
(namely, the rule given by Theorem \ref{Thm:sum-of-all-chords-Vsquared}). Facts
2-4 generalized to several types of regular polytopes. Given
that the literature contained little information regarding the chord
lengths of regular $n$-polytopes, $n\geq3$, it is clear that
this investigation has taken the study of chord regular
polytopes' chord lengths to a new level (and higher dimensions).

\subsection*{Acknowledgment} The author thanks Dr. Kenneth Wantz of Regent University
for his thorough critiques.

\end{document}